	\renewcommand{\div}{\textup{div}\,}
    \renewcommand{\phi}{\varphi}
    \renewcommand{\epsilon}{\varepsilon}
    \renewcommand{\P}{\mathsf{P}}
	\newcommand	{\eins}	{\mathbbm{1}}   
	\newcommand	{\norm}[1]	{\left\lVert#1\right\rVert}
	\newcommand	{\abs}[1]		{\left\lvert#1\right\rvert}
\newcommand{\Cpl}{ \mathsf{Cpl}}
\newcommand{\cpl}{ \mathsf{cpl}}
\newcommand{\Q}{ \mathsf{Q}}
\newcommand{\q}{ \mathsf{q}}
\newcommand{\C}{\mathsf{C}}
\newcommand{\Leb}{\mathsf{Leb}}
\newcommand{\proj}{\mathsf{proj}}
\newcommand{\cost}{\mathsf c}
\newcommand{\met}{\mathsf d}
\newcommand{\U}{\mathsf U}
\newcommand{\test}{\mathcal{D}}
\newcommand\inner[2]{\langle #1, #2 \rangle}
	\DeclareMathOperator	{\IE}			{\mathbb{E}} 
	\DeclareMathOperator	{\IN}			{\mathbb{N}}
	\DeclareMathOperator	{\IP}			{\mathbb{P}}
	\DeclareMathOperator	{\IR}			{\mathbb{R}}
	\DeclareMathOperator	{\supp}			{supp}
        \DeclareMathOperator
    {\PP}			{\mathsf P}
     \DeclareMathOperator
    {\wass}			{\mathcal P(\Omega\times \IR^d)}
	\theoremstyle{plain}
\newtheorem{thm}			{Theorem}[section]
\newtheorem{lem}	[thm]	{Lemma}
\newtheorem{cor}	[thm]	{Corollary}
\newtheorem{prop}	[thm]	{Proposition}
\theoremstyle{definition}
\newtheorem{defi}	[thm]	{Definition}
\newtheorem{ex}	    [thm]	{Example}
\newtheorem{rem}	[thm]	{Remark}
\numberwithin{equation}{section}
\newcommand {\MH}[1]{\todo[inline,size=\footnotesize,color=green]{\textbf{MH:} #1}}
\newcommand {\BM}[1]{\todo[inline,size=\footnotesize,color=cyan]{\textbf{BM:} #1}}
\begin{document}
\title{A Benamou-Brenier formula for transport distances between stationary random measures}

\author[M. Huesmann]{Martin Huesmann}
\address{M.H.: Universit\"at M\"unster, Germany}
\email{martin.huesmann@uni-muenster.de}
\author[B. Müller]{Bastian Müller}
\address{B.M.: Universit\"at M\"unster, Germany   }
\email{bastian.mueller@uni-muenster.de}

\thanks{MH and BM are funded by the Deutsche Forschungsgemeinschaft (DFG, German Research Foundation) under Germany's Excellence Strategy EXC 2044 -390685587, Mathematics M\"unster: Dynamics--Geometry--Structure and   by the DFG through the SPP 2265 {\it Random Geometric Systems. }}

\begin{abstract}
We derive a Benamou-Brenier type dynamical formulation for the Wasserstein metric $\mathsf W_p$ between stationary random measures recently introduced in \cite{erbar2023optimal}. A key step is a reformulation of the metric $\mathsf W_p$ using Palm probabilities.
\end{abstract}

\date{\today}
\maketitle

\maketitle

\section{ Introduction}

Let $\mu_0,\mu_1\in\mathcal P_p(\mathbb R^d)$ be two probability measures on $\mathbb R^d$ with finite $p$-th moment, $p>1$. The Kantorovich-Wasserstein distance is 
\[
W^p_p(\mu_0,\mu_1)=\inf_{q\in \mathsf {Cpl}(\mu_0,\mu_1)}\int \norm{x-y}^p q(dx,dy),
\]
where $\mathsf {Cpl}(\mu_0,\mu_1)=\{q\in\mathcal P(\mathbb R^d\times\mathbb R^d): q(A\times \mathbb R^d)=\mu_0(A), q(\mathbb R^d\times A)=\mu_1(A), A\in \mathcal B(\mathbb R^d)\} $ is the set of all couplings between $\mu_0$ and $\mu_1$. A key step in the fascinating development the theory of optimal transport has undertaken in the last 30 years is the dynamical formulation of $W_p$ obtained by Benamou-Brenier in \cite{Benamou2000ACF}:
\begin{align}\label{eq:BB}
     W^p_p(\bar\mu_0,\bar\mu_1)=\inf \left\{\int_0^1\norm{v_t}_{L^p(\mu_t)}^pdt\mid \mu_0=\bar\mu_0,\mu_1=\bar\mu_1,\partial_t\mu_t+\div(v_t\mu_t)=0 \right\}.
\end{align}
For $p=2$ the right hand side, formally, has the structure of a Riemannian metric on $\mathcal P_2(\mathbb R^d)$ inducing a formal calculus that allows to rephrase certain PDEs as gradient flows and leads to new interpretation and proofs of and relation between several functional and geometric inequalities, see e.g.\ \cite{Ot01, OtVi00} \cite[Chapter 4]{figalli2021invitation}, \cite[Chapter 8]{Santa} or \cite[Chapter 23]{Villani}.

The goal of this article is to derive a Benamou-Brenier type dynamical formulation of the metric $\mathsf W_p$ between stationary random measures recently introduced in \cite{erbar2023optimal}. Before stating our main result we will recall the definition of $\mathsf W_p$,  $p>1$. Denote by $\mathcal M(\mathbb R^d)$ the set of all Radon measures on $\mathbb R^d$. Let $\theta=(\theta_x)_{x\in\mathbb R^d}$ be the natural action of $\mathbb R^d$ on $\mathcal M(\mathbb R^d)$ given by $\theta_x\xi(A)=\xi(A-x).$ Any measure $\mathsf P\in\mathcal P(\mathcal M(\mathbb R^d))$ invariant under the action of $\theta$ is called stationary. The intensity of a stationary $\mathsf P$ is given by $\int \xi([0,1]^d)\mathsf P(d\xi)$. We want to define a metric between two stationary random measures of equal finite intensity, which we will assume to be one. To this end, it is useful to take a probabilistic view and consider equivariant random measures instead of stationary probability measures. A random measure is a random variable $\xi:\Omega\to\mathcal M(\IR^d)$ defined on some probability space $(\Omega,\mathcal F, \Q)$. We will assume that the probability space admits a measurable flow $\theta=(\theta_x)_{x\in\IR^d}$\footnote{For notational simplicity we do not highlight the probability space on which the flow is defined. It will always be clear from the context.} such that 
\[
\theta_x\Omega\to\Omega, \theta_0=id, \theta_x\circ\theta_y=\theta_{x+y}, \text{ for all } x,y\in \IR^d,
\]
and $\Q$ is invariant under the flow, i.e.\ $\Q$ is stationary. Moreover, we will assume that on this probability space there are two random measure $\xi,\eta$ both equivariant under the flow, i.e.\ $\xi(\theta_x\omega)(A-x)=\xi(\omega)(A)$, and such that the distribution of $\xi$ is $\mathsf P$ and that of $\eta$ is $\mathsf R$.
We will call such a probability space admissible (see \eqref{eq:topology_on_omega} for the precise definition).  A random measure $\q:\Omega\to\mathcal M(\IR^d \times \IR^d)$ is called an equivariant coupling of $\xi$ and $\eta$, if it is almost surely a coupling of $\xi$ and $\eta$ and if it is invariant under the diagonal action of $\theta$, i.e.\ $\q(\theta_x\omega)(A-x,B-x)=\q(\omega)(A,B)$ for all Borel sets $A,B$.  We denote the set of all equivariant couplings of $\xi$ and $\eta$ by $\mathsf{cpl}_e(\xi,\eta).$ For $n\in\IN$ write $\Lambda_n=[-n/2,n/2]^d.$ Then, we can define the metric $\mathsf W_p$ by
\begin{equation}\label{eq:Wp}
\mathsf W^p_p(\mathsf P,\mathsf R)=\inf_{(\Omega,\mathcal F,\Q), (\xi,\eta)} \inf_{\q\in\mathsf{cpl}_e(\xi,\eta)}\IE_\Q\left[ \int_{\Lambda_1\times \IR^d} \norm{x-y}^p \q(dx,dy)\right] ,
\end{equation}
where the first infimum runs over all admissible probability spaces and all pairs of equivariant random measures $(\xi,\eta)$ such that $\xi\sim \mathsf P $ and $\eta\sim\mathsf R.$  In \cite{erbar2023optimal} it was shown that $\mathsf W_p$  is indeed a geodesic metric. Note that the definition of $\mathsf W_p$ is a two layer optimisation problem.

To derive a dynamical formulation of $\mathsf W_p$ similar to \eqref{eq:BB} we need to find a good replacement of the continuity equation dealing on the one hand with the infinite mass of $\xi$ and $\eta$ and on the other hand with the equivariance. The intuitive idea is as follows: Let $\q\in\mathsf{cpl}_e(\xi,\eta)$. By equivariance,
\begin{align}\label{eq:cost small r}
\IE_\Q\left[ \int_{\Lambda_1\times \IR^d} \norm{x-y}^p \q(dx,dy)\right]  = \frac1{r^d}\IE_\Q\left[ \int_{\Lambda_r\times \IR^d} \norm{x-y}^p \q(dx,dy)\right] 
\end{align}
for any $r>0$. Considering $r\to 0$ corresponds to conditioning $\xi$ to have mass in $0$. Mathematically, this means we consider Palm measures. Secondly, if $\q=(id,T)_\#\xi$, the map $T$ inherits equivariance properties of $\q$, i.e.\ necessarily we have $T(\theta_x\omega)(y)=T(\omega)(y+x)+x.$ In particular, for describing $T$ it is sufficient to specify $T(\omega)(0)$ for all $\omega$. This is exactly the relevant random variable when we consider the $r\to 0$ limit in \eqref{eq:cost small r}, i.e.\ condition on $\xi$ having mass in $0$. In \cite{LaTh09}, Last and Thorisson show that this intuition is correct. It is in fact equivalent to consider equivariant couplings between $\xi$ and $\eta$ or so-called shift-couplings of Palm versions of $\xi$ and $\eta$ under $\Q$ (see Section \ref{sec:metric_formulation} for a precise statement). The main advantage for us is that the coupling between the Palm versions does not require infinite mass, only random variables $T(\omega)(0)$ taking values in $\IR^d$ such that equivariant curves connecting $\xi$ and $\eta$ can be represented by curves of random variables in $\IR^d$ if we are willing to work with Palm measures. Therefore, the continuity equation in the context of stationary random measures can be cast in the form
\[
\partial_t\mathbb P_t + \nabla \cdot (V_t\mathbb P_t)=0
\]
for a curve of probability measures on $\Omega\times \IR^d$, a vector field $V:[0,1]\times \Omega \times \IR^d\to \IR^d\times \IR^d$ and a particular gradient taking the equivariance constraint into account (see Definition \ref{def:cont_equat}). The two components of the measures $\mathbb P_t$ encode on the one hand the evolution of the random measures and on the other hand the curve $(T_t(\omega)(0))_{t\in [0,1]}$ connecting the identity with $T(\omega)(0)$. 

Our main result (see Corollary \ref{cor:BenBrenier_lawsPP} for the precise statement) is as follows:

\begin{thm}\label{thm:main}
 For two stationary random measures $\P_i$, $i=0,1$, with unit intensity, we have for $p>1$
 \begin{align*}
        \mathsf W_p^p(\PP_0,\PP_1)=\inf \int_0^1 \norm{V_t}_{L^p(\IP_t)}^pdt,
    \end{align*}
    where the infimum runs over all admissible probability spaces $(\Omega,\mathcal F,\Q)$ and solutions to the continuity equation $((\IP_t)_{t\in [0,1]},(V_t)_{t\in [0,1]})$
   such that
    \begin{enumerate}
    \item 
      $(\proj_{\Omega})_{\#}\IP_t=\Q_{\xi_t}$, $0\leq t\leq 1$, where $\Q_{\xi_t}$ is the Palm measure of an equivariant random measures $\xi_t$ under $\Q$, and
\item the curve  $((\xi_t)_{\#}\Q)_{t\in [0,1]}$ is weakly continuous and connects
        $\xi_0\underset{\Q}{\sim}\PP_0$ and $\xi_1\underset{\Q}{\sim}\PP_1$.    
    \end{enumerate}
\end{thm}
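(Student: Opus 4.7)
The plan is to establish the two inequalities separately, following the classical Benamou--Brenier template adapted to the Palm picture developed in the introduction.

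\textbf{Upper bound.} Fix $\epsilon > 0$ and choose a near-optimal equivariant coupling $\q \in \cpl_e(\xi_0, \xi_1)$ between equivariant representatives of $\PP_0$ and $\PP_1$. By the Last--Thorisson correspondence recalled in Section \ref{sec:metric_formulation}, $\q$ is encoded by a shift-coupling of the Palm versions, i.e.\ by a measurable $T \colon \Omega \to \IR^d$ such that under $\Q_{\xi_0}$ the law of $T$ realises the shift-coupling and $\IE_{\Q_{\xi_0}}[\norm{T}^p]$ equals the transport cost of $\q$. I would then do a straight-line interpolation: set $X_t(\omega) := t\, T(\omega)$, $V_t(\omega, y) := T(\omega)$, and $\IP_t := (\id, X_t)_{\#} \Q_{\xi_0}$. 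A direct computation shows that $(\IP_t, V_t)$ solves the equivariant continuity equation in the sense of Definition \ref{def:cont_equat} and that $(\proj_\Omega)_\# \IP_t = \Q_{\xi_t}$, where $\xi_t$ is the equivariant random measure obtained by displacing each point of $\xi_0$ by the fraction $t$ of its assigned shift. The action along this curve equals $\IE_{\Q_{\xi_0}}[\norm{T}^p]$, which gives $\le$ upon letting $\epsilon \to 0$.

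\textbf{Lower bound.} Conversely, let $((\IP_t)_{t \in [0,1]}, (V_t)_{t \in [0,1]})$ be admissible for the right-hand side. The strategy is a superposition principle \`a la Ambrosio--Gigli--Savar\'e: represent the curve as the law of a random path $(\omega, X_t(\omega))$ with $\dot X_t = V_t(\omega, X_t)$ and $X_0 \equiv 0$, where the initial distribution on $\Omega$ is the Palm measure $\Q_{\xi_0}$. The random endpoint $X_1$ produces, via the inverse Last--Thorisson correspondence, an equivariant coupling $\q \in \cpl_e(\xi_0, \xi_1)$. H\"older's inequality yields $\norm{X_1}^p \le \int_0^1 \norm{V_t(\omega, X_t)}^p dt$, and integrating under the path measure and using that the time marginals are the $\IP_t$ gives $\mathsf W_p^p(\PP_0, \PP_1) \le \int_0^1 \norm{V_t}_{L^p(\IP_t)}^p dt$.

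\textbf{Main obstacle.} The genuine difficulty lies in the lower bound. The superposition principle must be adapted to the equivariant/Palm framework on the non-compact state space $\Omega \times \IR^d$, with the non-standard gradient of Definition \ref{def:cont_equat} that encodes the equivariance constraint. One must check that the trajectories $X_t$ stay compatible with the Palm picture throughout $[0,1]$, i.e.\ that transporting each point of $\xi_0$ along the random flow defines, globally, an equivariant random measure $\xi_t$ whose Palm measure is precisely $(\proj_\Omega)_\# \IP_t$, and that $t\mapsto (\xi_t)_\#\Q$ is weakly continuous interpolating $\PP_0$ and $\PP_1$. I expect this to require first approximating $V_t$ by vector fields that are smooth in the $\IR^d$-variable and suitably regular in $\omega$ (mollifying along the flow $\theta$), so classical ODE flows exist, and then passing to the limit using tightness of the associated path measures together with the action bound.
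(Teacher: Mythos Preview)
Your overall architecture (upper bound via straight-line interpolation in the Palm picture, lower bound via a superposition principle) matches the paper's. But there is a genuine conceptual error in how you set up the dynamics on $\Omega\times\IR^d$, and it breaks both directions as written.

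\textbf{The gap.} In your upper bound you set $\IP_t=(\id,X_t)_{\#}\Q_{\xi_0}$ with $X_t=tT$. Then $(\proj_\Omega)_{\#}\IP_t=\Q_{\xi_0}$ for \emph{every} $t$, so condition (1) fails at $t=1$: you never reach $\Q_{\xi_1}=\Q_\eta$. The same confusion appears in the lower bound, where you write an ODE $\dot X_t=V_t(\omega,X_t)$ for a curve in $\IR^d$ with $\omega$ frozen; but $V_t$ takes values in $\IR^d\times\IR^d$, and the whole point of the gradient $\nabla=(\nabla_\Omega,\nabla_{\IR^d})$ in Definition~\ref{def:cont_equat} is that the first component drives motion \emph{in the $\Omega$-direction via the flow $\theta$}. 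In the paper's construction the interpolating curve is
\[
\IE_{\IP_t}[f]=\IE_{\Q_{\xi_0}}\Big[\int f(\theta_{tz}\omega,z)\,T(\omega,dz)\Big],\qquad V_t(\omega,x)=(x,0)\in\IR^{2d},
\]
so the $\IR^d$-coordinate is the fixed target displacement while the $\Omega$-coordinate slides along the orbit; this is what makes $(\proj_\Omega)_{\#}\IP_t=\Q_{\xi_t}$ with $\xi_t$ the $t$-fraction displacement of $\xi_0$. Likewise, the superposition (Corollary~\ref{cor:superpositionpr}) solves an ODE in $\IR^{2d}$ for $(u_t,w_t)$ driven by $G^\omega_t(x,y)=V_t(\theta_x\omega,y)$, and represents $\IP_t$ as the law of $(\theta_{u_t}\omega,w_t)$. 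Once you rewrite your two steps in these coordinates, the Jensen/H\"older bound and the Last--Thorisson correspondence go through exactly as you outline.

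\textbf{A smaller point.} You anticipate mollifying a general $V_t$ and passing to the limit. The paper instead \emph{restricts} the infimum to vector fields satisfying the local boundedness/Lipschitz condition \eqref{eq:usual_assumptions}, so the characteristic ODE is well-posed from the start; mollification is used only inside the comparison principle (Proposition~\ref{prop:comparison}) that underlies the representation formula. Your proposed approximation route is a legitimate alternative if one wants to drop \eqref{eq:usual_assumptions}, but it is not needed for the theorem as stated and would require additional tightness arguments not present here.
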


The proof of this result goes in three steps. First we combine the results of \cite{LaTh09} and \cite{erbar2023optimal} to derive another representation of $\mathsf W_p$ using couplings of Palm measure which is again a two layer optimisation problem. We then first study the inner layer on a fixed admissible probability space. There, we will establish a Benamou-Brenier type representation adapting the approach of  \cite[Chapter 8]{AGS08} to our setup. In particular, we will also establish  a superposition principle  (see Corollary \ref{cor:superpositionpr}) for solutions to the continuity equation and a representation of solutions of the continuity equation by curves of random measures (Proposition \ref{prop:repres_xi_t}). Finally, we derive the main result by taking the infimum over all admissible probability spaces.

\subsection{Connection to the literature.}
Transportation of stationary random measures resp.\ existence and construction of balancing transport kernels, especially matchings and allocations, has been an active field of research in recent years. Results include the close  connection to (shift-) couplings of the corresponding Palm measures (cf.\ \cite{HoPe05,LaTh09}), applications to the Skorokhod embedding problem (cf.\ \cite{Last_Mörters}) and (non)-existence of factor allocations (cf.\ \cite{LaTh23, KhMe23}). Tools from optimal transport have been used in \cite{HS13, H16, HuMu23} to construct allocations between random measures. In the recent article \cite{erbar2023optimal}, the authors construct the metric $\mathsf W_p$ and show that the independent particle movement can be seen as a gradient flow of the specific relative entropy w.r.t.\ $\mathsf W_p$. Moerover, they establish an analogue of the classical HWI inequality.

In the context of non-stationary random measures \cite{GOZLAN2021109141}  prove classical functional inequalities in the setting of random point measures. The article \cite{ErHu15} initiated the investigation of synthetic curvature properties of the configuration space w.r.t.\ the non-normalised Wasserstein distance. This was vastly generalised in \cite{DSSu21,DSSu22} see also \cite{Su22}. In the recent \cite{DSHeSu23}, the authors introduce another metric on the configuration space using a Benamou-Brenier type formulation of a metric based on the non-local differential structure of the add-one cost operator. They show synthetic curvature properties and functional inequalities of the configuration space w.r.t.\ this metric.

The article \cite{DSHeSu23} is a good example of the flexibility of the Benamou-Brenier approach which was previously used for example in discrete spaces \cite{Maa11, Mie11}, for jump processes \cite{Er14, PeRoSaTs22}  or in the context of martingale optimal transport \cite{HuTr19}.

\subsection{Structure of the article.}
Section \ref{sec:setup} is devoted to fixing the setup of this article. In Section \ref{sec:metric_formulation} we  introduce a metric $\met$ on the space $\Omega\times \IR^d$, which allows us to rewrite the transport cost for two random measures $\xi$ and $\eta$ on a fixed admissable probability space as the double infimum $\inf_{\IP_0,\IP_1}\inf_{\U\in \Cpl(\IP_0,\IP_1)}\IE_{\U}\left[\met^2\right]$, where the first infimum ranges over all  measures satisfying $(\proj_{\Omega})_{\#}\IP_0=\Q_{\xi}$ and  $(\proj_{\Omega})_{\#}\IP_1=\Q_{\eta}$. Noting that the inner infimum is equal to the squared $\met$-Wasserstein distance $\mathbb W^2(\IP_0\IP_1)$, in Section \ref{sec:cont_equation}, we follow closely the classical theory.
At the beginning of Section \ref{sec:cont_equation} we introduce a notion of gradient on $\Omega$, which is defined using  the measurable flow on $\Omega$. This allows us  to formulate the continuity equation on $\Omega\times\IR^d$. In the rest of this section we adapt the approach of \cite[Chapter 8]{AGS08} to analyse the continuity equation on $\Omega\times \IR^d$ and to prove the Benamou-Brenier formulation for $\mathsf W_p$, i.e.\ Theorem \ref{thm:main}.

\section{Setup}\label{sec:setup}

Let $\Omega$ be a metrizable topological space,  $\mathcal F$  the Borel $\sigma$-algebra and $\Q$  a probability measure on $\mathcal F$.
We assume that  $(\Omega,\mathcal{F},\Q)$ is   equipped with a measurable flow $\theta_z:\Omega\to \Omega$, $z\in \IR^d$, $d\geq 1$. That is, the mapping $(z,\omega)\mapsto \theta_z\omega$ is measurable, $\theta_z\circ \theta_x=\theta_{x+z}$ for all $x,z\in \IR^d$ and $\theta_0$ is the identity.
Furthermore, let  $\Q$  be stationary  w.r.t. the flow $\theta$, i.e. $\Q(A)=\Q(\theta_z(A))$ for all $z\in \IR^d$. Finally, we assume  
that the function \begin{align}\label{eq:topology_on_omega}
    \Omega\times \IR^d\to \Omega, \quad(\omega,z)\mapsto \theta_z\omega 
\end{align}
is continuous, where we consider the product topology on $\Omega\times \IR^d$.
We call such probability spaces \textit{admissible}.

In the following a random measure $\xi$ is a locally finite transition kernel from $(\Omega,\mathcal{F})$ to $(\IR^d,\mathcal{B}(\IR^d))$, where locally finite means  that, for  $\IP$-a.e. $\omega \in \Omega$,  the measure $\xi(\omega,\cdot)$ is finite on bounded measurable sets.
A random measure $\xi$ is said to be equivariant if for all $\omega \in \Omega$, $z\in \IR^d$ and $B\in \mathcal{B}(\IR^d)$ it holds that \[
\xi(\omega,B)=\xi(\theta_z\omega,B-z).
\]

We denote by $\Q_{\xi}$ the Palm measure of $\xi$, which is defined by 
\begin{equation}\label{def: Palm}
    \Q_{\xi}(A)=\int\int_{\Lambda_1}\eins_A(\theta_z\omega)\xi(\omega,dz)\Q(d\omega),
\end{equation}
where $\Lambda_n:=[-n/2,n/2]^d$ is the $n$-cube. The refined Campbell theorem yields for every bounded measurable $f:\Omega\times \IR^d\to \IR$ the formula \begin{equation}\label{eq:campbell}
    \IE_{\Q}\left[\int f(\theta_z\omega,z)\xi(\omega,dz)\right]
    =\IE_{\Q_{\xi}}\left[\int f(\omega,z)dz\right].
\end{equation}
A  weighted transport-kernel $T$ (in the following just called kernel) is a kernel from $\Omega\times \IR^d$ to $\IR^d$ such that $T(\omega,z,\cdot)$ is locally finite for all $(\omega,z)\in \Omega\times \IR^d$. It is called invariant if \[
T(\theta_z\omega,u-z,B-z)=T(\omega,u,B),\quad \forall u,z\in \IR^d,\omega\in \Omega, B\in \mathcal{B}(\IR^d).
\]We say that the kernel $T$ balances two random measures $\xi$ and $\eta$, if for all $\omega\in \Omega$ \[
\int T(\omega,z,\cdot)\xi(\omega,dz)=\eta(\omega,\cdot).
\]

Let $\xi$ and $\eta$ be two random measures.
A  coupling $q$ of $\xi$ and $\eta$ is a  transition kernel  from $(\Omega,\mathcal{F})$ to $(\IR^d\times \IR^d,\mathcal{B}(\IR^d\times \IR^d))$ such that for almost every $\omega
\in \Omega$ the measure $q(\omega)$ is a coupling of $\xi(\omega)$ and $\eta(\omega)$, that is \begin{align}\label{eq:semicoupl}
(\pi_1)_{\#}(q(\omega))= \xi(\omega) \text{ and }(\pi_2)_{\#}(q(\omega))=\eta(\omega),
\end{align}
where $\pi_i$ denotes the projection onto the $i$-th coordinate. A coupling $q$ is said to be equivariant if \[
q(\omega,A\times B)=q(\theta_z\omega,(A-z)\times (B-z))\quad \forall \omega\in\Omega, z\in \IR^d, A,B\in \mathcal{B}(\IR^d).
\]
For equivariant random measures $\xi$ and $\eta$ we denote by $\mathsf{cpl}_{e}(\xi,\eta)$ the set of all equivariant couplings of $\xi$ and $\eta$.
Obviously, every invariant $(\xi,\eta)$-balancing kernel $T$ defines a coupling $\q\in \cpl_e(\xi,\eta)$ by \[
\q(\omega)(A\times B)=\int_A \int T(\omega,z,B)\xi(\omega,dz),\quad A,B\in \mathcal{B}(\IR^d)
\]   Conversely, for a given $\q\in \cpl_e(\xi,\eta)$,
let $(\q_z)_{z\in \IR^d}$ be a disintegration of $\q$ w.r.t. the first marginal, i.e. $\q=\q_z\xi(dz)$.
Then \[
T(\omega,z,B)=\q_z(\omega)(B),\quad B\in \mathcal{B}(\IR^d).
\]
defines an invariant balancing kernel.

We are now in a position to  define the cost function between two equivariant random measures $\xi$ and $\eta$.
\begin{defi}\label{def:cost_fct}
Fix $p>1$. For two equivariant random measures $\xi$ and $\eta$ with the same finite intensity set
\begin{align*}
\cost_p(\xi,\eta)&=\inf_{\q\in \cpl_e(\xi,\eta)}\IE_{\Q}\left[\int_{\Lambda_1\times \IR^d}\norm{x-y}^p\q(dx,dy)\right]
&=\inf_{T}\IE_{\Q}\left[\int_{\Lambda_1}
\int \norm{z-u}^pT(\omega,z,du)\xi(\omega,dz)\right],
\end{align*}where the second infimum runs over all invariant $(\xi,\eta)$-balancing kernels $T$.
\end{defi}

\begin{ex}\label{ex:canonical_space}
     Let $\mathcal M(\IR^d)$ be the space of Radon measures  on $\IR^d$ equipped with the topology of vague convergence, which is generated by  the mappings  $\bar{\xi}\mapsto \int fd\bar{\xi}$ for continuous, compactly supported $f\in  C_c(\IR^d)$. Denote the corresponding Borel $\sigma$-algebra by $\mathcal F$. The group $\IR^d$ acts on $\mathcal M(\IR^d)$ by the natural shift operation $\theta_z$, i.e.\ $(\theta_z\bar{\xi})(A)=\bar{\xi}(A+z)$ for any point $z\in\IR^d$, measure $\bar{\xi}\in\mathcal M(\IR^d)$ and any Borel set $ A\subset \IR^d$.
     On $\mathcal M(\IR^d)\times \mathcal M(\IR^d)$ define the diagonal shift by \begin{align*}
         \theta_z(\bar{\xi},\bar{\eta})=(\theta_z\bar{\xi},\theta_z\bar{\eta}),\quad \forall \bar{\xi},\bar{\eta}\in \mathcal M(\IR^d), z\in \IR^d
     \end{align*}
Define $\Cpl_s(\P_0,\P_1)$ to be the set of all couplings $\Q\in \mathcal P\big( \mathcal M(\IR^d)\times \mathcal M(\IR^d)\big)$ between $\P_0$ and $\P_1$ that are stationary in the sense that $\Q\circ(\theta_z,\theta_z)^{-1}=\Q$ for all $z\in \IR^d$. For such a $\Q$ the probability space $(\mathcal M(\IR^d)\times \mathcal M(\IR^d),\mathcal F\otimes \mathcal F,\Q)$ is admissible, where $\mathcal F\otimes \mathcal F$ is the product $\sigma$-algebra. One can then define random measures $\xi$ and $\eta$ as projections, i.e. define \begin{align*}
    \xi(\bar{\xi},\bar{\eta})=\bar{\xi} \text{ and } \eta(\bar{\xi},\bar{\eta})=\bar{\eta},\quad \forall (\bar{\xi},\bar{\eta})\in \mathcal M(\IR^d)\times \mathcal M(\IR^d).
\end{align*}
\end{ex}

\section{Metric structure of $\Omega\times\IR^d$}\label{sec:metric_formulation}
We fix an admissible probability space  $(\Omega,\mathcal F,\Q)$ and two equivariant random measures $\xi$ and $\eta$ defined on $\Omega$.
For a given invariant $(\xi,\eta)$-balancing kernel $T$  we can rewrite the associated cost  in terms of the Palm measure. The invariance of $T$ and the Campbell formula \eqref{eq:campbell} yield \begin{align}\label{eq:cost_in_palm}
\IE_{\Q}\left[\int_{\Lambda_1}
\int \norm{z-u}^pT(\omega,z,du)\xi(\omega,dz)\right]
&= 
\IE_{\Q}\left[\int_{\Lambda_1}
\int \norm{u}^pT(\theta_z\omega,0,du)\xi(\omega,dz)\right]\\
&=\IE_{\Q_{\xi}}\left[\int \norm{u}^pT(\omega,0,du)\right]\nonumber.
\end{align}
We will make use of the following characterization of $(\xi,\eta)$-balancing kernels, proved in \cite[Theorem 4.1]{LaTh09}.
\begin{thm}\label{eq:Charact_transport_palm}
    An invariant weighted transport kernel $T$ is $(\xi,\eta)$-balancing iff \[
    \IE_{\Q_{\xi}}\left[\int f(\theta_z\omega)T(\omega,0,dz)\right]=\IE_{\Q_{\eta}}\left[ f\right]
    \]
    for all measurable $f:\Omega\to [0,\infty)$.
\end{thm}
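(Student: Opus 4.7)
My plan is to reduce the equivalence to an identification of Palm measures. Introduce the auxiliary equivariant random measure
\begin{equation*}
\nu_T(\omega, \cdot) := \int T(\omega, z, \cdot)\,\xi(\omega, dz),
\end{equation*}
whose equivariance is immediate from invariance of $T$ and equivariance of $\xi$. Then $T$ is $(\xi,\eta)$-balancing exactly when $\nu_T = \eta$ holds $\Q$-a.s., so the proof reduces to proving the intermediate identity
\begin{equation*}
\IE_{\Q_{\nu_T}}[f] = \IE_{\Q_\xi}\left[\int f(\theta_z \omega)\,T(\omega, 0, dz)\right] \qquad (\ast)
\end{equation*}
for all measurable $f \geq 0$, combined with the fact that an equivariant random measure is determined $\Q$-a.s.\ by its Palm measure.

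To prove $(\ast)$, I would start from the definition \eqref{def: Palm} of $\Q_{\nu_T}$ and insert $\nu_T$, obtaining
\begin{equation*}
\IE_{\Q_{\nu_T}}[f] = \IE_\Q\left[\int_{\Lambda_1}\int f(\theta_z \omega)\, T(\omega, u, dz)\,\xi(\omega, du)\right].
\end{equation*}
The invariance of $T$ yields $T(\omega, u, dz) = T(\theta_u\omega, 0, d(z-u))$; substituting $w = z - u$ rewrites the integrand as $\tilde g(\theta_u \omega, u)$ with $\tilde g(\omega', u) := \int \eins_{\Lambda_1 - u}(w)\, f(\theta_w \omega')\, T(\omega', 0, dw)$. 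Applying the refined Campbell formula \eqref{eq:campbell} to $\xi$ produces $\IE_{\Q_\xi}\bigl[\int \tilde g(\omega, u)\, du\bigr]$, and a Fubini swap together with $\int_{\IR^d}\eins_{\Lambda_1 - u}(w)\, du = |\Lambda_1| = 1$ collapses the auxiliary $u$-integral, yielding exactly the right-hand side of $(\ast)$.

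With $(\ast)$ established, the forward implication is immediate: balancing forces $\nu_T = \eta$, hence $\Q_{\nu_T} = \Q_\eta$, and $(\ast)$ becomes the claimed formula. For the converse, the hypothesis together with $(\ast)$ gives $\Q_{\nu_T} = \Q_\eta$, and to conclude $\nu_T = \eta$ $\Q$-a.s.\ I would invoke a Palm inversion step: for any equivariant random measure $\nu$ and measurable $h \geq 0$, a reformulation of \eqref{eq:campbell} gives $\IE_\Q[\int h(\omega, z)\,\nu(\omega, dz)] = \IE_{\Q_\nu}[\int h(\theta_{-z}\omega, z)\,dz]$, so that the joint law $\Q(d\omega)\nu(\omega, dz)$ on $\Omega \times \IR^d$ is fully determined by $\Q_\nu$. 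The main technical obstacle I anticipate is carrying out the two nested changes of variable in $(\ast)$ in precisely the right order, so that the cube indicator $\eins_{\Lambda_1}(z)$ from the definition of the Palm measure gets transformed into $\eins_{\Lambda_1 - u}(w)$ and can then be integrated out against Lebesgue measure to produce the factor $|\Lambda_1| = 1$; everything else is essentially bookkeeping.
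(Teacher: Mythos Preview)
The paper does not supply a proof of this theorem; it is quoted verbatim as \cite[Theorem~4.1]{LaTh09}. So there is no in-paper argument to compare against.

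That said, your proposal is correct and is essentially the standard Last--Thorisson argument: introduce the $T$-image $\nu_T=\int T(\omega,u,\cdot)\,\xi(\omega,du)$, compute its Palm measure via invariance of $T$ and the refined Campbell formula for $\xi$, and then use that an equivariant random measure is determined $\Q$-a.s.\ by its Palm measure. One small notational slip: in your displayed formula
\[
\IE_{\Q_{\nu_T}}[f] \;=\; \IE_\Q\!\left[\int_{\Lambda_1}\!\int f(\theta_z \omega)\, T(\omega, u, dz)\,\xi(\omega, du)\right]
\]
the subscript $\Lambda_1$ should sit on the $z$-integration (i.e.\ on $\nu_T(\omega,dz)$), not on the outer $\xi(\omega,du)$-integral, as the definition~\eqref{def: Palm} dictates; your subsequent computation with $\tilde g$ and $\eins_{\Lambda_1-u}(w)$ shows you are in fact treating it that way, so this is purely a matter of writing the integrals in the right order. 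A second minor point for the converse direction: you should remark that $\nu_T$ need not a priori be locally finite, but the formula $(\ast)$ still makes sense as an identity between $[0,\infty]$-valued integrals, and equality with $\Q_\eta$ (a probability measure) forces finiteness a posteriori; alternatively one can restrict to bounded $f$ first. Otherwise the ``anticipated obstacle'' you flag is handled exactly as you describe, and the Palm-inversion step via~\eqref{eq:campbell} indeed recovers the Campbell measure $\Q(d\omega)\nu_T(\omega,dz)$ and hence $\nu_T$ $\Q$-a.s.
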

Combining \eqref{eq:cost_in_palm} and Theorem \ref{eq:Charact_transport_palm} we obtain the following formula for  $\cost(\xi,\eta)$.
\begin{cor}\label{cor:palm_transp}
   We have
   \[
    \cost_p(\xi,\eta)=\inf_{T}\IE_{\Q_{\xi}}\left[\int \norm{z}^pT(\omega,dz)\right],
    \]
where the infimum runs over all locally finite kernels $T$ from $\Omega$ to $\IR^d$ satisfying \begin{equation}\label{eq:transp_prop_kernel}
\IE_{\Q_{\xi}}\left[\int f(\theta_z\omega)T(\omega,dz)\right]=\IE_{\Q_{\eta}}\left[ f\right]
    \end{equation}
    for all measurable $f:\Omega\to [0,\infty)$.
\end{cor}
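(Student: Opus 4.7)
The plan is to show that Corollary \ref{cor:palm_transp} is essentially an algebraic reformulation of the two ingredients already present: the Palm-side rewriting \eqref{eq:cost_in_palm} of the cost of an invariant balancing kernel, and the characterization of balancing in Theorem \ref{eq:Charact_transport_palm}. The key point is to exhibit a bijection between invariant $(\xi,\eta)$-balancing kernels $T(\omega,z,du)$ on $\Omega\times\IR^d$ and locally finite kernels $\tilde T(\omega,du)$ from $\Omega$ to $\IR^d$ satisfying the Palm transport identity \eqref{eq:transp_prop_kernel}, which preserves the cost functional, so the two infima agree.

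More concretely, I would first define the map $T\mapsto \tilde T$ by $\tilde T(\omega,\cdot) := T(\omega,0,\cdot)$. Local finiteness transfers immediately. Since $T$ is invariant, i.e.\ $T(\theta_z\omega,u-z,B-z)=T(\omega,u,B)$, setting $u=z$ gives the reconstruction formula
\begin{equation*}
T(\omega,z,B)=\tilde T(\theta_z\omega,B-z),
\end{equation*}
so the inverse map $\tilde T\mapsto T$ is well defined, and one checks (using $\theta_0=\id$ and the flow property) that it produces an invariant kernel. Thus the map is a bijection between invariant weighted transport kernels and locally finite kernels from $\Omega$ to $\IR^d$.

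Next I would match costs and constraints under this bijection. The cost identity is exactly \eqref{eq:cost_in_palm}, which after the substitution reads
\begin{equation*}
\IE_{\Q}\left[\int_{\Lambda_1}\int\norm{z-u}^p T(\omega,z,du)\,\xi(\omega,dz)\right]
=\IE_{\Q_{\xi}}\left[\int\norm{u}^p \tilde T(\omega,du)\right].
\end{equation*}
For the balancing constraint, Theorem \ref{eq:Charact_transport_palm} says that $T$ is $(\xi,\eta)$-balancing if and only if
\begin{equation*}
\IE_{\Q_{\xi}}\left[\int f(\theta_z\omega)\,T(\omega,0,dz)\right]=\IE_{\Q_{\eta}}[f]\quad\text{for all measurable } f\geq 0,
\end{equation*}
which, rewritten via $\tilde T(\omega,dz)=T(\omega,0,dz)$, is precisely \eqref{eq:transp_prop_kernel}. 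Taking infima in Definition \ref{def:cost_fct} and using the bijection yields the claim.

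The only mild subtlety, which I would verify carefully but not expect to cause any real difficulty, is checking that the reconstruction $T(\omega,z,B):=\tilde T(\theta_z\omega,B-z)$ is genuinely a kernel (i.e.\ jointly measurable in $(\omega,z)$ and, for each $(\omega,z)$, a locally finite measure in $B$), which follows from the measurability of the flow $\theta$ and the kernel property of $\tilde T$. Everything else is bookkeeping with the Campbell formula \eqref{eq:campbell} and the invariance of $\Q$ under $\theta$.
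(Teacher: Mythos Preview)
Your proposal is correct and is exactly the argument the paper has in mind: the corollary is stated as an immediate consequence of \eqref{eq:cost_in_palm} and Theorem \ref{eq:Charact_transport_palm}, and the bijection you spell out (with inverse given in Remark \ref{rem:construction kernel}) is precisely what makes ``combining'' rigorous. There is nothing to add.
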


\begin{rem}\label{rem:construction kernel}
Given a locally finite kernel $T$ from $\Omega$ to $\IR^d$ we can define an invariant weighted transport kernel in the following way. Let $T(\omega,0,\cdot):=T(\omega,\cdot)$ and define for $z\in \IR^d$ \[
T(\omega,z,A):=T(\theta_z\omega,0,A-z),\quad A\in \mathcal{B}(\IR^d).
\]Then $T$ is invariant by construction.
\end{rem}

We define a metric on the product space $\Omega\times \IR^d$.

\begin{defi}
For $\omega_1,\omega_2\in \Omega$ let \begin{equation*}
    \met_{\Omega}(\omega_1,\omega_2)=\inf_{z\in \IR^d:\theta_z\omega_1=\omega_2}\norm{z}.
\end{equation*}
For $(\omega_i,z_i)\in \Omega\times \IR^d$, $i=1,2$ then define \begin{equation}
    \met((\omega_1,z_1),(\omega_2,z_2))=(\norm{z_1-z_2}^2+\met_{\Omega}(\omega_1,\omega_2)^2)^{1/2}.
\end{equation} 
\end{defi}
Then $(\Omega\times \IR^d,\met)$ is an extended metric space and the metric $\met$ is lower semicontinuous.
\begin{lem}\label{lem:met_lsc}
     The metric $\met$ is lower semicontinuous on $(\Omega\times \IR^d)^2$.
\end{lem}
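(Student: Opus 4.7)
The plan is to reduce the claim to lower semicontinuity of $\met_\Omega$ on $\Omega^2$, since the other term $\norm{z_1-z_2}^2$ appearing under the square root in the definition of $\met$ is jointly continuous in $(z_1,z_2)$, and lower semicontinuity is preserved by adding a continuous function and by applying the (continuous, increasing) square root. Because $\Omega$ is metrizable, sequential lower semicontinuity suffices, so it is enough to show that for sequences $\omega_i^n\to\omega_i$ in $\Omega$ ($i=1,2$) one has
\begin{equation*}
    \met_\Omega(\omega_1,\omega_2)\leq \liminf_{n\to\infty}\met_\Omega(\omega_1^n,\omega_2^n).
\end{equation*}

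Let $L$ denote the right-hand side. If $L=+\infty$ there is nothing to prove, so assume $L<\infty$ and, after passing to a subsequence, that $\met_\Omega(\omega_1^n,\omega_2^n)\to L$. For each $n$ choose, using the definition of $\met_\Omega$ as an infimum, a vector $z_n\in\IR^d$ with $\theta_{z_n}\omega_1^n=\omega_2^n$ and $\norm{z_n}\leq \met_\Omega(\omega_1^n,\omega_2^n)+\tfrac1n$. Then $(\norm{z_n})_n$ is bounded, so a further subsequence satisfies $z_n\to z$ in $\IR^d$ with $\norm{z}\leq L$.

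The key step is to pass to the limit in the identity $\theta_{z_n}\omega_1^n=\omega_2^n$. This is precisely where the standing continuity assumption \eqref{eq:topology_on_omega} on the flow $(z,\omega)\mapsto\theta_z\omega$ enters: since $(z_n,\omega_1^n)\to(z,\omega_1)$ in $\IR^d\times\Omega$, continuity of the action gives $\theta_{z_n}\omega_1^n\to\theta_z\omega_1$, while the left-hand side equals $\omega_2^n\to\omega_2$. Hence $\theta_z\omega_1=\omega_2$, so $z$ is admissible in the definition of $\met_\Omega(\omega_1,\omega_2)$, and
\begin{equation*}
    \met_\Omega(\omega_1,\omega_2)\leq \norm{z}\leq L,
\end{equation*}
which is the desired inequality.

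The main (and essentially only) obstacle is ensuring that the sequence $(z_n)$ stays in a compact set so that a limit point $z$ exists; this is handled by choosing near-minimizers and using that $L$ is finite. Everything else is routine: the continuity of the flow does the real work, and the reduction from $\met$ to $\met_\Omega$ is purely algebraic.
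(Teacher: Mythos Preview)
Your proof is correct and follows essentially the same approach as the paper: both arguments reduce to the finite-$\liminf$ case, choose (near-)minimizing shifts $z_n$ with $\theta_{z_n}\omega_1^n=\omega_2^n$, extract a convergent subsequence using boundedness, and then invoke the continuity assumption \eqref{eq:topology_on_omega} to pass to the limit. Your version is slightly more explicit in separating out the lower semicontinuity of $\met_\Omega$ and in working with near-minimizers rather than tacitly assuming the infimum is attained, but the substance is identical.
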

\begin{proof}
Let $(\omega_n,z_n)\xrightarrow{n\to \infty}(\omega,z)$ and $(\omega'_n,z'_n)\xrightarrow{n\to \infty}(\omega',z')$ w.r.t. the  product topology on $\Omega\times \IR^d$. Since we only have to consider the case\[
\liminf_{n\to \infty}\met((\omega_n,z_n),(\omega'_n,z'_n))<\infty,
\]we may assume (by considering subsequences) that $\theta_{u_n}\omega_n=\omega'_n$ with $u_n\xrightarrow{n\to \infty}u\in \IR^d$. It follows from the assumption \eqref{eq:topology_on_omega}  that $\theta_u\omega=\omega'$. Hence,  \[\met^2((\omega,z),(\omega',z'))\leq \norm{u}^2+\norm{z-z'}^2=
\liminf_{n\to \infty}\met^2((\omega_n,z_n),(\omega'_n,z'_n)).
\]
\end{proof}

Let $\wass$ be the set of all probability measure on $\Omega\times \IR^d$. We equip $\wass$ with the Wasserstein distance induced by the metric $\met$, which we denote by $\mathbb W_p$. For $\IP,\bar{\IP}\in \mathcal P(\Omega\times \IR^d)$ it is defined by  \begin{align*}
    \mathbb W_p^p(\IP,\bar{\IP})=\inf_{\Q\in \Cpl(\IP,\bar{\IP})}\IE_{\Q}[\met^p].
\end{align*} 

Then

\begin{lem}\label{lem:wass_lsc}
The metric $\mathbb{W}_p$ is lower semicontinuous w.r.t. to the weak convergence on $\mathcal{P}(\Omega\times \IR^d)$.
\end{lem}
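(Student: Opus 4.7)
The plan is to follow the classical lower semicontinuity argument for Wasserstein distances built from a lower semicontinuous cost, exploiting Lemma \ref{lem:met_lsc}. Suppose $\IP_n\to \IP$ and $\bar\IP_n\to \bar\IP$ weakly in $\mathcal{P}(\Omega\times \IR^d)$. We want to show
\[
\mathbb{W}_p^p(\IP,\bar\IP)\leq \liminf_{n\to \infty} \mathbb{W}_p^p(\IP_n,\bar\IP_n).
\]
Without loss of generality, the right-hand side is finite, and by passing to a subsequence we may assume that the $\liminf$ is in fact a limit.

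First, for each $n$ I would pick a $1/n$-optimal coupling $\Q_n\in \Cpl(\IP_n,\bar\IP_n)$, so that
\[
\IE_{\Q_n}[\met^p]\leq \mathbb{W}_p^p(\IP_n,\bar\IP_n)+\tfrac1n.
\]
Next, I would check that $(\Q_n)_{n\in\IN}$ is tight on $(\Omega\times \IR^d)^2$. Since $(\IP_n)$ and $(\bar\IP_n)$ are weakly convergent, they are tight (Prokhorov), so given $\epsilon>0$ there exist compacts $K_1,K_2\subset \Omega\times \IR^d$ with $\IP_n(K_1^c),\bar\IP_n(K_2^c)<\epsilon/2$ uniformly in $n$; then $\Q_n((K_1\times K_2)^c)<\epsilon$, which gives the tightness of $(\Q_n)$. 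Passing once more to a subsequence I obtain a weak limit $\Q_n\to \Q$ in $\mathcal{P}((\Omega\times \IR^d)^2)$. Since weak convergence is preserved under continuous projections, the marginals of $\Q$ are $\IP$ and $\bar\IP$, so $\Q\in \Cpl(\IP,\bar\IP)$.

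Finally, because $\met^p$ is nonnegative and lower semicontinuous on $(\Omega\times \IR^d)^2$ by Lemma \ref{lem:met_lsc}, the Portmanteau theorem for lower semicontinuous integrands yields
\[
\IE_\Q[\met^p]\leq \liminf_{n\to\infty}\IE_{\Q_n}[\met^p] = \liminf_{n\to\infty}\mathbb{W}_p^p(\IP_n,\bar\IP_n).
\]
Since $\Q\in \Cpl(\IP,\bar\IP)$, the left-hand side dominates $\mathbb{W}_p^p(\IP,\bar\IP)$, which finishes the proof.

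The only delicate point is the tightness step, because $\Omega$ is only assumed metrizable rather than Polish. However, tightness is only needed for the sequence $(\Q_n)$, and it is deduced directly from the tightness of the converging marginals, so no further topological assumption on $\Omega$ is required. The Portmanteau inequality for nonnegative lower semicontinuous functions holds on any metrizable space (via approximation by bounded continuous functions from below), so the final step goes through unchanged.
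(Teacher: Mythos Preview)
Your proposal is correct and takes essentially the same approach as the paper, which simply cites \cite[Lemma~4.3]{Villani} together with Lemma~\ref{lem:met_lsc}; what you have written is the standard proof of that cited lemma (near-optimal couplings, tightness from the marginals, Portmanteau for nonnegative lsc integrands). One small caveat: your closing paragraph correctly flags that Prokhorov's converse direction needs a Polish space, but your resolution is circular---you still deduce tightness of the marginals from their weak convergence, which is precisely the step requiring Polishness---so the claim ``no further topological assumption on $\Omega$ is required'' overstates the case; this concern, however, applies equally to the paper's citation, since Villani's Lemma~4.3 is itself stated for Polish spaces.
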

\begin{proof}
This follows from the lower semicontinuity of $\met$ and \cite[Lemma 4.3]{Villani}.
\end{proof}

The cost function can  be rewritten in terms of this metric. 
\begin{prop}\label{prop:wass_dist_palm}
    We have \begin{align}
        \cost_p(\xi,\eta)=\inf_{\IP_0,\IP_1}\inf_{\U\in \Cpl(\IP_0,\IP_1)}\IE_{\U}\left[\met^p\right]=\inf_{\IP_0,\IP_1}\mathbb W_p^p(\IP_0,\IP_1),
    \end{align}
    where the first infimum runs over all probability measures $\IP_i$, $i=0,1$, on $\Omega\times \IR^d$ such that $(\proj_{\Omega})_{\#}\IP_0=\Q_{\xi}$ and $(\proj_{\Omega})_{\#}\IP_1=\Q_{\eta}$.
\end{prop}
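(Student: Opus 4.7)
The second equality in the proposition is just the definition of $\mathbb{W}_p^p$, so my plan is to establish only the first equality by going through the Palm-kernel representation of $\cost_p$ in Corollary \ref{cor:palm_transp} and proving the two inequalities separately.

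For the inequality $\inf_{\IP_0,\IP_1}\mathbb{W}_p^p\leq \cost_p$, given a locally finite kernel $T$ satisfying the transport relation \eqref{eq:transp_prop_kernel} I set $\IP_0:=\Q_\xi\otimes\delta_0$ and take $\U$ to be the law of $((\omega,0),(\theta_z\omega,0))$ with $\omega\sim\Q_\xi$ and $z\sim T(\omega,\cdot)$. The second marginal $\IP_1$ then satisfies $(\proj_{\Omega})_\#\IP_1=\Q_\eta$ by \eqref{eq:transp_prop_kernel}. Since $u=z$ is admissible in the infimum defining $\met_\Omega$, we have $\met_\Omega(\omega,\theta_z\omega)\leq\|z\|$ and hence
$$\IE_\U[\met^p]=\IE_{\Q_\xi}\Bigl[\int\met_\Omega(\omega,\theta_z\omega)^p\,T(\omega,dz)\Bigr]\leq \IE_{\Q_\xi}\Bigl[\int\|z\|^p\,T(\omega,dz)\Bigr].$$
Optimizing over $T$ via Corollary \ref{cor:palm_transp} finishes this direction.

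For the reverse inequality, fix $(\IP_0,\IP_1)$ with the correct $\Omega$-marginals, a coupling $\U\in\Cpl(\IP_0,\IP_1)$ with $\IE_\U[\met^p]<\infty$, and $\epsilon>0$. I disintegrate $\U$ along the first $\Omega$-coordinate, writing $\U(d\omega_1,dz_1,d\omega_2,dz_2)=\Q_\xi(d\omega_1)K(\omega_1;dz_1,d\omega_2,dz_2)$. The crucial step is a Borel map $u^*_\epsilon:\Omega\times\Omega\to\IR^d$ with $\theta_{u^*_\epsilon(\omega_1,\omega_2)}\omega_1=\omega_2$ and $\|u^*_\epsilon(\omega_1,\omega_2)\|\leq\met_\Omega(\omega_1,\omega_2)+\epsilon$ on $\{\met_\Omega<\infty\}$: the continuity assumption \eqref{eq:topology_on_omega} makes the multifunction $(\omega_1,\omega_2)\mapsto\{u\in\IR^d:\theta_u\omega_1=\omega_2,\ \|u\|\leq\met_\Omega(\omega_1,\omega_2)+\epsilon\}$ closed-valued and non-empty on that set, and I invoke Kuratowski--Ryll-Nardzewski (Borel measurability of $\met_\Omega$ follows from Lemma \ref{lem:met_lsc}). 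The probability kernel
$$T_\epsilon(\omega_1,A):=\int \eins_A\bigl(u^*_\epsilon(\omega_1,\omega_2)\bigr)\,K(\omega_1;dz_1,d\omega_2,dz_2),\quad A\in\mathcal B(\IR^d),$$
then satisfies \eqref{eq:transp_prop_kernel}, because $\omega_2=\theta_{u^*_\epsilon(\omega_1,\omega_2)}\omega_1$ $\U$-a.s.\ gives $\IE_{\Q_\xi}[\int f(\theta_z\omega_1)\,T_\epsilon(\omega_1,dz)]=\IE_\U[f(\omega_2)]=\IE_{\Q_\eta}[f]$.

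Applying Corollary \ref{cor:palm_transp} to $T_\epsilon$ bounds $\cost_p(\xi,\eta)\leq\IE_\U[\|u^*_\epsilon\|^p]\leq \IE_\U[(\met_\Omega+\epsilon)^p]$; sending $\epsilon\downarrow 0$ and using $\met_\Omega\leq \met$ yields $\cost_p(\xi,\eta)\leq \IE_\U[\met^p]$, and infimizing over $(\IP_0,\IP_1,\U)$ completes the argument. The only real technical point is the measurable selection of $u^*_\epsilon$; an alternative to Kuratowski--Ryll-Nardzewski is to enumerate a countable dense set $\{u_i\}\subset\IR^d$ and pick the first $u_i$ for which $\theta_{u_i}\omega_1=\omega_2$ and $\|u_i\|\leq\met_\Omega(\omega_1,\omega_2)+\epsilon$, which makes Borel measurability entirely explicit. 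Everything else reduces to the Palm/Campbell calculus already set up.
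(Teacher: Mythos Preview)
Your main argument is correct and follows the same strategy as the paper. For the easy inequality you both take $\IP_0=\Q_\xi\otimes\delta_0$, build $\U$ from a kernel $T$ via $(\omega,z)\mapsto((\omega,0),(\theta_z\omega,0))$, and use $\met_\Omega(\omega,\theta_z\omega)\le\|z\|$. For the converse you both project the coupling to $\Omega^2$ and extract a kernel by measurably selecting a shift that carries $\omega_1$ to $\omega_2$. The paper differs only in that it selects the \emph{exact} minimizer $\min\{z\in\IR^d:\theta_z\omega_1=\omega_2\}$ directly (using a norm-then-lexicographic total order on $\IR^d$; the set is closed by \eqref{eq:topology_on_omega}, so the minimum exists), thereby avoiding your $\epsilon$-approximation and the limit $\epsilon\downarrow0$. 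Your Kuratowski--Ryll-Nardzewski route is slightly longer but makes the measurability step more explicit; the two approaches are equivalent in content.

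One genuine error: your proposed \emph{alternative} to KRN---enumerating a countable dense set $\{u_i\}\subset\IR^d$ and picking the first $u_i$ with $\theta_{u_i}\omega_1=\omega_2$ and $\|u_i\|\le\met_\Omega(\omega_1,\omega_2)+\epsilon$---does not work. The solution set $\{z:\theta_z\omega_1=\omega_2\}$ is closed but typically has empty interior (for instance, if $\Omega=\IR^d$ with the translation flow it is the single point $\omega_2-\omega_1$), so it need not intersect any prescribed countable set at all. Drop this alternative, or replace it by the paper's direct lex-minimum selection, which is a bona fide Borel map.
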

\begin{proof}
    Let $\IP_i$, $i=0,1$, and $\U\in \Cpl(\IP_0,\IP_1)$ be given such that $\IE_{\U}[\met^p]<\infty$.
    The measure $\U$ induces a measure $\tilde{\U}$ on $\Omega^2$ by setting for $f:\Omega^2\to \IR$ bounded and measurable \[
    \int f(\omega_1,\omega_2)d\tilde{\U}(\omega_1,\omega_2)= \int f(\omega_1,\omega_2)d\U((\omega_1,z_1),(\omega_2,z_2)).
    \] 
    In particular \[
    \IE_{\tilde{\U}}[\met_{\Omega}^p]\leq \IE_{\U}[\met^p]<\infty,
    \] which implies that $\met_{\Omega}^p$ is 
    $\tilde{\U}$-a.e. finite. Define the map \begin{align*}
        F:\{(\omega_1,\omega_2)\in \Omega^2:\met_{\Omega}(\omega_1,\omega_2)<\infty\}\to \Omega\times \IR^d, 
    \end{align*}
    by $F(\omega_1,\omega_2)=(\omega_1,\min\{z\in \IR^d:\theta_z\omega_1=\omega_2\})$, where we use the the following ordering on $\IR^d$:
    \begin{align*}
        x< y \iff \norm{x}\leq \norm{y}
    \end{align*}
    and in the case $\norm{x}=\norm{y}$ we use the lexicographic ordering as a tie-breaker.
    Disintegrating the pushforward $F_{\#}\tilde{\U}$ with respect to the first coordinate, we obtain a kernel $T$, which satisfies for bounded and measurable $f:\Omega\to \IR$ \begin{align*}
        \IE_{\Q_{\xi}}\left[\int f(\theta_z\omega)T(\omega,dz)\right]=
        \int f(\theta_z\omega)dF_{\#}\tilde{\U}(\omega,z)=
        \int f(\omega_2)d\tilde{\U}(\omega_1,\omega_2)=
         \IE_{\Q_{\eta}}\left[ f\right].
    \end{align*}
    Hence, the kernel $T$ satisfies \eqref{eq:transp_prop_kernel}. Furthermore, 
    \begin{align}\label{eq:cost_kernel}
        \IE_{\Q_{\xi}}\left[\int \norm{z}^pT(\omega,dz)\right]=
        \IE_{\tilde{\U}}[\met_{\Omega}^p]\leq \IE_{\U}[\met^p]
    \end{align}
Hence, Corollary \ref{cor:palm_transp} and  equation \eqref{eq:cost_kernel} show that \[
\cost_p(\xi,\eta)\leq \inf_{\IP_0,\IP_1}\inf_{\U\in \Cpl(\IP_0,\IP_1)}\IE_{\U}[\met^p].
\]

Let now $T$ be a kernel from $\Omega$ to $\IR^d$ satisfying property \eqref{eq:transp_prop_kernel}. We define the measure $\U$ on $(\Omega \times \IR^d)^2$ by \begin{align*}
    \IE_{U}\left[f\right]=\IE_{\Q_{\xi}}\left[
    \int f(\omega,0,\theta_z\omega,0)T(\omega,dz)\right],
\end{align*} 
for all  measurable functions $f:(\Omega \times \IR^d)^2\to [0,\infty)$. Since $T$ satisfies  \eqref{eq:Charact_transport_palm}, we have $\U\in \Cpl(\Q_{\xi}\otimes \delta_0,\Q_{\eta}\otimes\delta_0)$. Furthermore, \begin{align*}
    \IE_{\U}\left[\met^p\right]&=\IE_{\Q_{\xi}}\left[\int \met^p(\omega,0,\theta_z\omega,0)T(\omega,dz)\right]\\
    &\geq \IE_{\Q_{\xi}}\left[\int \norm{z}^pT(\omega,dz)\right]
\end{align*}
and hence
\[
\cost_p(\xi,\eta)\geq \inf_{\IP_0,\IP_1}\inf_{\U\in \Cpl(\IP_0,\IP_1)}\IE_{\U}[\met^p].
\]
\end{proof}

\section{Continuity Equation}\label{sec:cont_equation}

In this section, we closely follow the general strategy of \cite[Chapter 8]{AGS08} adapted to our setup. 
Unless explicitly stated otherwise, we fix an admissible probability space  $(\Omega,\mathcal F,\Q)$. 
\begin{defi}
    For a function $\Phi:\Omega\to \IR$ and $\omega\in \Omega$ we denote by $\nabla_{\Omega}\Phi(\omega)$ the unique vector, if it exists, such that the following holds: Let  $\varepsilon>0$ and $c:(-\varepsilon,\varepsilon)\to \IR^d$ be a curve with $c(0)=0$, which is differentiable at $t=0$, then \begin{equation*}
        \frac{d}{dt}\Bigr|_{t=0}\Phi(\theta_{c(t)}\omega)=\inner{c'(0)}{\nabla_{\Omega} \Phi(\omega)},
    \end{equation*}  
    where $\inner{\cdot}{\cdot}$ denotes the Euclidean inner product on $\IR^d$.
\end{defi}

\begin{ex}\label{example}
Let $\rho_{\varepsilon}(z,u)=(2\pi \varepsilon)^{-d}\exp({-\norm{(z,u)}^2/2\varepsilon})$ and   $f:\Omega \times \IR^d\to \IR$ be continuous (w.r.t. the product topology) and bounded.  Set\[
 f_{\varepsilon}(\omega,x)=\int_{\IR^d}\int_{\IR^d} \rho_{\varepsilon}(z,u) f(\theta_z\omega,x-u)dzdu.
 \]
 For $r,v\in \IR^d$ we then have \begin{align*}
        f_{\varepsilon}(\theta_r\omega,x+v)=\int_{\IR^d}\int_{\IR^d} \rho_{\varepsilon}(z-r,u+v) f(\theta_z\omega,x-u)dzdu
    \end{align*}
    and hence \begin{align*}
        \left(-\nabla_{\Omega} f_{\varepsilon}(\omega,x),\nabla_{\IR^{d}} f_{\varepsilon}(\omega,x)\right)^T=\int_{\IR^d}\int_{\IR^d} \nabla\rho_{\varepsilon}(z,u) f(\theta_z\omega,x-u)dzdu\in \IR^{2d}.
    \end{align*}
    In particular, the gradient of $f_{\varepsilon}$ is uniformly bounded and continuous w.r.t. the metric $\met$.
\end{ex}

To be able to formulate the continuity equation we need to introduce a suitable space of  test functions.
\begin{defi}\label{def:test_fct}
Let $\test$ be the set of all functions  $\Phi:[0,T]\times \Omega\times \IR^d\to \IR$ with the following properties\begin{enumerate}[i)]
\item The map $\Phi$ is bounded and  measurable w.r.t. the product topology on $[0,T]\times \Omega\times \IR^d$.
    \item There exist compact sets $K_1\subset (0,T), K_3\subset \IR^d$ such that $\Phi_t(x,\omega)=0$ if $t\notin K_1$ or $x\notin K_3$.
    \item The partial derivative $\partial_t \Phi$ and the gradients w.r.t $\Omega$ and $\IR^d$ exist  and are bounded uniformly, i.e. \begin{align}\label{eq:unifbound_dev}
    \sup_{(t,x,\omega)\in (0,T)\times \IR^d\times \Omega}\lvert \partial_t \Phi_t(x,\omega)\rvert+\lVert \nabla_{\Omega}\Phi_t(x,\omega)\rVert+\lVert \nabla_{\IR^d}\Phi_t(x,\omega)\rVert<\infty.
    \end{align}
    \item For fixed $0\leq t\leq T$ the gradient $\nabla \Phi_t$ is continuous with respect to the metric $\met$.
\end{enumerate}
\end{defi}
For $\Phi_t\in \test$ we use the notation $\nabla\Phi_t=(\nabla_{\Omega}\Phi_t,\nabla_{\IR^d}\Phi_t)\in \IR^{2d}$.
The space of test functions is rich in the following sense.

\begin{lem}
    Let $(\IP_t)_{0\leq t\leq T}$ be  a weakly continuous curve of finite signed  measures on $\Omega\times \IR^d$ and assume that for all nonnegative functions $\Psi \in \test$ \begin{align*}
        \int_0^T\int_{\Omega\times \IR^d} \Psi_t(\omega,x) d\IP_t(\omega,x) dt\leq 0.
    \end{align*}
    Then $\IP_t$ is a negative measure for all $0\leq t\leq T$.
\end{lem}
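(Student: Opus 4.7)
The plan is to test negativity of each $\IP_{t_0}$, $t_0 \in [0,T]$, against the class of nonnegative $f \in C_b(\Omega \times \IR^d)$ with compact support in the $\IR^d$-variable. Given such an $f$ with $\supp f \subset \Omega \times K$ for some compact $K \subset \IR^d$, I would use the candidate test function
\begin{align*}
\Psi^{\eta,R,\varepsilon}_t(\omega, x) := \chi_\eta(t)\, \chi_R(x)\, f_\varepsilon(\omega, x),
\end{align*}
where $\chi_\eta \geq 0$ is smooth with $\supp \chi_\eta \subset (t_0 - \eta, t_0 + \eta) \cap (0,T)$ and $\int \chi_\eta = 1$, $\chi_R \in C_c^\infty(\IR^d)$ is nonnegative with $\chi_R \equiv 1$ on a ball containing $K$, and $f_\varepsilon$ is the joint Gaussian mollification from Example \ref{example}.

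First, I would verify $\Psi^{\eta,R,\varepsilon} \in \test$. Items i) and ii) of Definition \ref{def:test_fct} are immediate from $\chi_\eta$ and $\chi_R$. For iii), the product rule (noting that $\nabla_\Omega$ ignores the factors $\chi_\eta(t)$ and $\chi_R(x)$) expresses $\nabla \Psi^{\eta,R,\varepsilon}_t$ as a sum of terms each controlled by the uniform $L^\infty$-bound on $\nabla f_\varepsilon$ supplied by Example \ref{example}; iv) follows because the same example guarantees the $\met$-continuity of $\nabla f_\varepsilon$, and multiplication by the smooth, compactly supported factors $\chi_\eta, \chi_R$ preserves this property. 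Plugging $\Psi^{\eta,R,\varepsilon}$ into the hypothesis, I would then send $\varepsilon \to 0$: the pointwise convergence $f_\varepsilon \to f$, together with $\chi_R \equiv 1$ on $\supp f$ and the uniform bound $\norm{f_\varepsilon}_\infty \leq \norm{f}_\infty$, gives $\int \chi_R f_\varepsilon \, d\IP_t \to \int f\, d\IP_t$ by dominated convergence for each $t$; the $t$-integral passes by a second dominated convergence, using $\sup_{t\in[0,T]} |\IP_t|(\Omega\times\IR^d) < \infty$, which follows from weak continuity on the compact interval $[0,T]$ and Banach-Steinhaus in $C_b(\Omega \times \IR^d)^*$. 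Finally, sending $\eta \to 0$ and invoking weak continuity of $t \mapsto \int f\, d\IP_t$ yields $\int f\, d\IP_{t_0} \leq 0$.

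The last step is a routine approximation: for any Borel $B$, I would first localize to $B \cap (\Omega \times B_R(0))$ via monotone convergence in $R$, and then approximate the indicator by a decreasing sequence of nonnegative continuous functions with compact $\IR^d$-support, using outer regularity of the finite Borel measures $\IP_{t_0}^{\pm}$ on the metrizable space $\Omega \times \IR^d$. This promotes the inequality to $\IP_{t_0}(B) \leq 0$ for all Borel $B$. The only genuine obstacle is confirming that the mollified product $\Psi^{\eta,R,\varepsilon}$ lies in $\test$—in particular the joint $\met$-continuity of its gradient—but this is immediate from Example \ref{example}; the rest is classical dominated convergence and measure-theoretic approximation.
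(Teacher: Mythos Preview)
Your proposal is correct and follows essentially the same approach as the paper: both construct test functions of the form (time cutoff) $\times$ (space cutoff) $\times$ (Gaussian mollification of $f$), verify membership in $\test$ via Example~\ref{example}, pass $\varepsilon\to 0$ by dominated convergence, and then use weak continuity of $(\IP_t)_t$ to localize in time. The only cosmetic differences are that you take $f$ with compact $\IR^d$-support from the start (whereas the paper removes the space cutoff at the end via $g_n\nearrow 1$), and that you are more explicit than the paper in justifying the dominating function for the $t$-integral through a Banach--Steinhaus bound on $\sup_t|\IP_t|(\Omega\times\IR^d)$.
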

\begin{proof}
    Let $f:\Omega \times \IR^d\to [0,\infty)$ be continuous (w.r.t. the product topology) and bounded and let $\rho_{\varepsilon}$ be defined by $\rho_{\varepsilon}(z,u)=(2\pi \varepsilon)^{-d}\exp({-(\norm{z}^2+\norm{u}^2)/\varepsilon})$, $z,u\in \IR^d$. Define \begin{align*}
        f_{\varepsilon}(\omega,x)=\int_{\IR^d}\int_{\IR^d} \rho_{\varepsilon}(z,u) f(\theta_z\omega,x-u)dzdu.
    \end{align*}
    Then $f_{\varepsilon}(\omega,x)\xrightarrow{\varepsilon\to \infty}f(\omega,x)$ for all $\omega\in \Omega$ and $x\in \IR^d$. Let $g\in C_c^{\infty}(\IR^d)$ and $h\in C^{\infty}_c((0,T))$ be nonnegative. Set $\Psi^{\varepsilon}_t(\omega,x)=h(t)g(x)f_{\varepsilon}(\omega,x)$. Then, by Example \ref{example},  $\Psi^{\varepsilon}\in \test$ and hence\begin{align}\label{eq:dense_test}
        0\geq  \int_0^T\int_{\Omega\times \IR^d} \Psi^{\varepsilon}_t(\omega,x) d\IP_t(\omega,x) dt \xrightarrow{\varepsilon\to 0} \int_0^T\int_{\Omega\times \IR^d} h(t)g(x)f(\omega,x) d\IP_t(\omega,x) dt.
    \end{align}
    by dominated convergence.
    Note that by the weak continuity of the curve $(\IP_t)_{0\leq t\leq 1}$, the function \[
    [0,1]\ni t\mapsto \int_{\Omega\times \IR^d} g(x)f(\omega,x) d\IP_t(\omega,x)
    \]
    is continuous. Hence, \eqref{eq:dense_test} implies that 
\begin{align*}
    0\geq \int_{\Omega\times \IR^d} g(x)f(\omega,x) d\IP_t(\omega,x), \quad \forall g\in C_c^{\infty}(\IR^d).
\end{align*}
    Considering  positive functions $g_n\in C_c^{\infty}(\IR^d)$ with $g_n(x)\nearrow 1$ as $n\to \infty$, $x\in \IR^d$, we  obtain 
\begin{align*}
       0\geq  \int_{\Omega\times \IR^d} f(\omega,x) d\IP_t(\omega,x),
    \end{align*}
which proves the claim.
\end{proof}

\begin{defi}\label{def:cont_equat}
Let $(\IP_t)_{0\leq t\leq T}$ be  a weakly continuous curve of finite signed  measures on $\Omega\times \IR^d$ and 
$V:[0,T]\times \Omega\times \IR^d\to \IR^d\times \IR^d$ a Borel vector field. The pair $((\IP_t)_{t\in [0,T]},(V_t)_{t\in [0,T]})$ is said to satisfy the continuity equation if for all functions $\Phi\in \test$
 \begin{equation}\label{eq:CE}
    \int_0^T\int_{\Omega\times \IR^d} \partial_t \Phi + \inner{V_t} {\nabla \Phi_t} d\IP_t dt=0\tag{CE}. 
\end{equation}
\end{defi}

The following  comparison principle holds.

\begin{prop}\label{prop:comparison}
   Let $((\IP_t)_{t\in [0,T]},(V_t)_{t\in [0,T]})$ satisfy \eqref{eq:CE}  with $\IP_0\leq0$ and \begin{align*}
        \int_0^T\int \norm{V_t}d\abs{\IP_t}dt<\infty,
    \end{align*}
    where $\abs{\IP_t}$ is the total variation of $\IP_t$.
    Furthermore, assume that for all compact sets $B\subset \IR^d$ \begin{align}\label{eq:prop_assump_sup_lip}
        \int_0^T\abs{\IP_t}(\Omega \times B )+\sup_{\Omega\times B}\norm{V_t}+Lip(V_t,\Omega\times B)dt<\infty,
    \end{align}
    where $Lip(V_t,\Omega\times B)$ is the Lipschitz constant of $V_t:\Omega\times B\to \IR^{d}\times \IR^d$ w.r.t. the metric $\met$. Then $\IP_t\leq 0$ for any $t\in [0,T]$.
\end{prop}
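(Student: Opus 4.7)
The strategy is the classical one, adapted to our Palm setup: build a rich family of solutions to the backward transport equation along $V$ and use them as test functions in \eqref{eq:CE} to transfer the sign from $\IP_0$ to $\IP_\tau$.

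\textbf{Step 1: Backward characteristic flow.} Fix $\tau\in(0,T]$. Assumption \eqref{eq:prop_assump_sup_lip} gives local boundedness and local $\met$-Lipschitz regularity of $V_s=(V_s^\Omega,V_s^{\IR^d})$, integrable in $s$. For any $(\omega_0,x_0)$, writing $\omega(s)=\theta_{y(s)}\omega_0$, the backward characteristic system reduces to
\[
\dot y(s)=V_s^\Omega(\theta_{y(s)}\omega_0,x(s)),\qquad \dot x(s)=V_s^{\IR^d}(\theta_{y(s)}\omega_0,x(s)),
\]
with terminal data $y(\tau)=0$, $x(\tau)=x_0$. Since $\met_\Omega(\theta_{y_1}\omega_0,\theta_{y_2}\omega_0)\le\|y_1-y_2\|$, the $\met$-Lipschitz constant of $V_s$ bounds the ordinary Lipschitz constant of the right-hand side in $(y,x)$, so this is an honest Cauchy–Lipschitz problem in $\IR^d\times\IR^d$. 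A standard Grönwall argument using \eqref{eq:prop_assump_sup_lip} gives a global solution on $[0,\tau]$ and a $\met$-continuous flow $X_\tau^s(\omega_0,x_0):=(\theta_{y(s)}\omega_0,x(s))$.

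\textbf{Step 2: Transported test functions.} Let $\zeta(\omega,x)=g(x)f_\varepsilon(\omega,x)$ with $g\in C_c^\infty(\IR^d)$ nonnegative and $f_\varepsilon$ the mollified function from Example \ref{example}; such $\zeta$ is nonnegative, bounded, compactly supported in $x$, with a gradient that is bounded and $\met$-continuous. Let $\eta_\delta\in C_c^\infty((0,T))$ be a time cutoff supported in $(0,\tau)$ such that $\eta_\delta'$ approximates the distribution $\delta_0-\delta_\tau$ as $\delta\to 0$, and set
\[
\Phi_s^\delta(\omega,x):=\eta_\delta(s)\,\zeta(X_\tau^s(\omega,x)).
\]
Compactness of the support of $\Phi^\delta$ in $x$ follows from the compact support of $g$ together with the uniform bound $\|x(s)-x_0\|\le\int_0^T\sup_{\Omega\times B}\|V_r\|\,dr$ on the second component of the flow, and the required regularity (bounded, $\met$-continuous gradient) is inherited by chain rule from that of $\zeta$ and $X_\tau^s$, so $\Phi^\delta\in\test$.

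\textbf{Step 3: Use (CE) and pass to the limit.} By construction $s\mapsto\zeta(X_\tau^s)$ is transported by $V$, hence $\partial_s\Phi_s^\delta+\inner{V_s}{\nabla\Phi_s^\delta}=\eta_\delta'(s)\,\zeta(X_\tau^s)$. Plugging $\Phi^\delta$ into \eqref{eq:CE} gives
\[
\int_0^T\!\int \eta_\delta'(s)\,\zeta(X_\tau^s(\omega,x))\,d\IP_s(\omega,x)\,ds=0.
\]
Weak continuity of $s\mapsto\IP_s$ together with continuity of $s\mapsto\int\zeta(X_\tau^s)\,d\IP_s$ lets us pass to the limit $\delta\to 0$ to obtain $\int\zeta\,d\IP_\tau=\int\zeta\circ X_\tau^0\,d\IP_0\le 0$, using $\zeta\ge 0$ and $\IP_0\le 0$. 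Letting $g\nearrow 1$ and then $\varepsilon\to 0$, as in the proof of the density lemma above, yields $\int f\,d\IP_\tau\le 0$ for every bounded continuous nonnegative $f$, so that $\IP_\tau\le 0$.

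\textbf{Main obstacle.} The bulk of the work is Step 1: constructing and controlling the flow $X_\tau^s$ on the infinite-dimensional space $\Omega\times\IR^d$. The decisive observation is that the motion in $\Omega$ is driven by the $\IR^d$-action $\theta$, so that the flow equation projects onto a standard ODE in $\IR^d\times\IR^d$ whose Lipschitz norm is controlled by that of $V_s$ in $\met$. Once this is in hand, verifying that $\Phi^\delta$ actually lies in $\test$ (in particular keeping spatial compact support under the flow) is a straightforward consequence of \eqref{eq:prop_assump_sup_lip}.
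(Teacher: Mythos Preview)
Your outline follows the same characteristics strategy as the paper, but it skips the two regularisations that make the argument work, and without them Steps~1--2 are not justified.

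\emph{Global existence of the flow.} Assumption \eqref{eq:prop_assump_sup_lip} only gives, for each compact $B\subset\IR^d$, that $\sup_{\Omega\times B}\|V_t\|$ and $Lip(V_t,\Omega\times B)$ are integrable in $t$; nothing controls how these quantities grow with $B$. A vector field such as $V_t(\omega,x)=(0,\|x\|^2 e_1)$ satisfies the hypotheses but its characteristics blow up in finite time, so the ``standard Gr\"onwall argument'' you invoke cannot produce a flow on all of $[0,\tau]$, and the bound $\|x(s)-x_0\|\le\int_0^T\sup_{\Omega\times B}\|V_r\|\,dr$ in Step~2 is circular (which $B$?). The paper handles this by first multiplying $V$ by a spatial cutoff $f_R$ so that the modified field is globally bounded and Lipschitz; this creates an extra boundary term $\int\nabla_{\IR^d}\chi_R\cdot(\ldots)\,d\IP_t$ when testing, which is killed as $R\to\infty$ precisely by the assumption $\int_0^T\!\int\|V_t\|\,d|\IP_t|\,dt<\infty$.

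\emph{Differentiability of the flow.} Even where the flow exists, membership in $\test$ requires a genuine gradient of $\Phi_s^\delta$, and your chain-rule claim presupposes that $X_\tau^s$ is differentiable in $(\omega,x)$. A merely Lipschitz vector field yields only a Lipschitz flow, so $\nabla\Phi_s^\delta$ need not exist. The paper therefore mollifies the (cut-off) field to obtain a smooth $V^\varepsilon$, whose flow is $C^1$ with derivatives controlled via \eqref{eq:prop_assump_sup_lip}; the resulting $\Phi^\varepsilon$ solves the transport equation for $V^\varepsilon$, not for $V$, producing an additional error $\int\chi_R\,\nabla\Phi^\varepsilon\cdot(V-V^\varepsilon)\,d\IP_t$ that vanishes as $\varepsilon\to 0$. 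Your Step~3 identity $\partial_s\Phi_s^\delta+\langle V_s,\nabla\Phi_s^\delta\rangle=\eta_\delta'(s)\,\zeta(X_\tau^s)$ is exactly what fails without this detour.
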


\begin{proof}
 We follow closely the strategy of \cite[Proposition 8.1.7]{AGS08} using the method of characteristics. Due to the invariance in our setting we need to slightly modify the argument.

Step 1: Mollification of the vector field $V$. For $R>0$ fix a function $f_R\in C^{\infty}_c(\IR^d)$ such that $f_R=1$ on $B_{2R}(0)$ and $f_R=0$ on $B_{3R}(0)$. Define for  $(\omega,x)\in \Omega\times \IR^d$ and $0\leq t\leq T$\begin{align}\label{eq:def_V'}V_t'(\omega,x)=f_R(x)V_t(\omega,x).\end{align} Let $y\in \IR^d$, then if $x,x+y\in B_{3R}(0)$ by the triangle inequality \begin{align*}
        \norm{V'_t(\omega,x+y)-V_t'(\omega,x)}
        \leq \norm{y} \norm{\nabla f_R}_{\infty} \sup_{\Omega\times B_{3R}(0)}\norm{V_t}+ \norm{f_R}_{\infty} \norm{y} Lip(V_t,\Omega\times B_{3R}(0))
    \end{align*}
If $x\in B_{3R}(0)$ and $x+y\notin B_{3R}(0)$, then \[
\norm{V'_t(\omega,x)}=\norm{f_R(x)V_t(\omega,x)-f_R(x+y)V_t(\omega,x)}\leq \norm{\nabla f_R}_{\infty}\norm{y} \sup_{\Omega\times B_{3R}(0)}\norm{V_t}.
\]
Hence, for fixed $\omega\in \Omega$ the function $V'_t(\omega,x)$ is Lipschitz on $\IR^d$ with  $R$-dependent constant $\sqrt{2}L_R$, given by \begin{equation}\label{eq:def_L_R}
    L_R= (\norm{f_R}_{\infty}+\norm{\nabla f_R}_{\infty})(\sup_{\Omega\times B_{3R}(0)}\norm{V_t}+Lip(V_t,\Omega\times B_{3R}(0)))
\end{equation}
 For $\omega\in \Omega$ and $x,y,z\in \IR^d$\begin{align*}
     \norm{V'_t(\omega,x)-V'_t(\theta_z\omega,x+y)}&\leq
     \norm{V'_t(\omega,x)-V'_t(\theta_z\omega,x)}+
      \norm{V'_t(\theta_z\omega,x)-V'_t(\theta_z\omega,x+y)}\\
      &\leq \norm{f_R}_{\infty} \met_{\Omega}(\omega,\theta_z\omega) Lip(V_t,\Omega\times B_{3R}(0))+
      L_R \norm{y}.
 \end{align*}    
Hence, $V'_t$ is Lipschitz w.r.t. the metric $\met$ on $\Omega\times \IR^d$ with constant $L_R$. Moreover, we have \begin{align}\label{eq:Lip_const_R}
    \sup_{\Omega\times \IR^d}\norm{V'_t}\leq 
    \sup_{\Omega\times B_{3R}(0)}\norm{V_t}.
\end{align}
For $t\notin [0,T]$ let $V'_t=0$. Consider the family of mollifiers $(\rho_{\varepsilon})_{\varepsilon>0}$, defined by \[\rho_{\varepsilon}(t,x,y)=(2\pi \varepsilon)^{-(2d+1)/2}\exp({-(t^2+\norm{x}^2+\norm{y}^2)/2\varepsilon}),\quad  \forall (t,x,y)\in \IR\times \IR^d\times \IR^d.\]

We set \begin{align*}
    V^{\epsilon}_t(\omega,x)=\int_{\IR\times \IR^d\times \IR^d} \rho_{\varepsilon}(s,r,u)V'_{t+s}(\theta_r\omega,x+u)dsdrdu.
\end{align*}
Hence, by \eqref{eq:Lip_const_R} and  assumption \eqref{eq:prop_assump_sup_lip} \begin{align}\label{eq:integr_sup_lim}
    S:=\sup_{\epsilon>0}\int_0^T \sup_{\Omega\times \IR^d} \norm{V^{\epsilon}_t}+ Lip(V^{\epsilon}_t,\Omega\times \IR^d)<\infty.
\end{align}
Moreover, \begin{align*}
    \norm{V^{\epsilon}_t(\omega,x)}&\leq \int_{\IR\times \IR^d\times \IR^d} \rho_{\varepsilon}(s,r,u)\sup_{\Omega\times \IR^d}\norm{V'_{t+s}}dsdrdu\\
    &\leq(2\pi \varepsilon)^{-(2d+1)/2} \int_{\IR\times \IR^d\times \IR^d} \exp({-(\norm{r}^2+\norm{u}^2)/2\varepsilon})\sup_{\Omega\times \IR^d}\norm{V'_{t+s}}dsdrdu\\
    &=(2\pi \varepsilon)^{-(2d+1)/2} \int_{\IR\times \IR^d} \exp({-(\norm{r}^2+\norm{u}^2)/2\varepsilon})\left(\int_0^T\sup_{\Omega\times \IR^d}\norm{V'_{s}}ds\right)drdu\\
&=C_{\varepsilon}\int_0^T\sup_{\Omega\times \IR^d}\norm{V'_{s}}ds
\end{align*}
for some $\varepsilon$ dependent constant $C_{\varepsilon}$. Then assumption \eqref{eq:prop_assump_sup_lip} and \eqref{eq:Lip_const_R} show that 
\begin{align}\label{eq:unif_bound_vepislon}
\sup_{[0,T]\times \Omega\times \IR^d} \norm{V^{\epsilon}}<\infty.
\end{align}

For $\omega\in \Omega$ and $t\in \IR$ define $G_t^{\omega,\epsilon}:\IR^{2d}\to \IR^{2d}$ by \begin{align}\label{eq:def_G}
G_t^{\omega,\epsilon}(x,y)=V^{\epsilon}_t(\theta_x\omega,y).\end{align} It follows immediately that \begin{align*}
    \sup_{\IR^{2d}}\norm{G^{\omega,\epsilon}_t}\leq \sup_{\Omega\times \IR^d} V^{\epsilon}_t
\end{align*}

and \begin{align*}
    Lip(G^{\omega,\varepsilon}_t,\IR^{2d})\leq  Lip(V^{\epsilon}_t,\Omega\times \IR^d).
\end{align*}
Thus, by \eqref{eq:integr_sup_lim} for any $\omega\in \Omega$\begin{align}\label{eq:abcd}
   \sup_{\epsilon>0}\int_0^T\sup_{\IR^{2d}}\norm{G^{\omega,\epsilon}_t} + 
   Lip(G^{\omega,\varepsilon}_t,\IR^{2d})dt
   \leq S< \infty
\end{align} 

Step 2: Solving the random ODE. For any $\omega\in \Omega$, $s\in [0,T]$, $\varepsilon>0$ and $(x,y)\in \IR^{2d}$ , we apply \cite[Lemma 8.1.4]{AGS08} to obtain a unique solution of the ODE \begin{align}\label{eq:ode_IR^d}
    \partial_tX^{\omega,\epsilon}_t(x,y,s)=
G^{\omega,\varepsilon}_t(X^{\omega,\epsilon}_t(x,y,s)), \quad X^{\omega,\epsilon}_s(x,y,s)=(x,y),
\end{align} 
 defined on the interval $[0,T]$. Furthermore, combining \cite[Lemma 8.1.4]{AGS08} with \eqref{eq:abcd} yields that for any $\omega\in \Omega$ and $s\in [0,T]$
 \begin{align*}
     \int_0^T\sup_{(x,y)\in \IR^{2d}}\norm{\partial_t X^{\omega,\epsilon}_t(x,y,s)}dt\leq \int_0^T\sup_{\IR^{2d}}\norm{G^{\omega,\epsilon}_t} + 
   Lip(G^{\omega,\varepsilon}_t,\IR^{2d})dt\leq S
     \end{align*} 
     and 
     \begin{align}\label{eq:lip_bounds_IR2}
     \sup_{s,t\in [0,T]} Lip(X^{\omega,\epsilon}_t(\cdot,\cdot,s),\IR^{2d})\leq \exp\left(\int_0^T\sup_{\IR^{2d}}\norm{G^{\omega,\epsilon}_t} + 
   Lip(G^{\omega,\varepsilon}_t,\IR^{2d})dt\right) \leq e^S.
 \end{align}
 Uniqueness of solutions implies that for $\omega\in \Omega$, $x,y\in \IR^d$ and $0\leq s\leq t\leq T$ we have \begin{align}\label{eq:uniq_sol}
      X^{\omega,\varepsilon}_t(X_s^{\omega,\varepsilon}(x,y,0),s)=X^{\omega,\varepsilon}_t(x,y,0).
 \end{align}
We  use the following notation for the two $\IR^d$ components \begin{align*}X^{\omega,\epsilon}_t(x,y,s)=(U^{\omega,\epsilon}_t(x,y,s),W^{\omega,\epsilon}_t(x,y,s)).\end{align*}
By Definition \eqref{eq:def_G}, for $a\in \IR^d$   \[
G^{\theta_a\omega,\varepsilon}_t(x-a,y)=
V_t^{\epsilon}(\theta_{x-a}(\theta_a\omega),y)=G^{\omega,\epsilon}_t(x,y).
\]
Hence,  uniqueness of the solutions of \eqref{eq:ode_IR^d} implies  that \begin{align}\label{eq:displ_sol_IRd}
    X_t^{\theta_a\omega,\varepsilon}(x-a,y,s)=X_t^{\omega,\varepsilon}(x,y,s)-(a,0).
\end{align}

Now define \begin{align*}
    Z^{\epsilon}_t(\omega,y,s)=(\theta_{U^{\omega,\epsilon}_t(0,y,s)}\omega,W^{\omega,\epsilon}_t(0,y,s)).
\end{align*}
Then we have \begin{align}\label{eq:calc_charact_omega}
    Z^{\varepsilon}_t(Z_s^{\varepsilon}(\omega,y,0),s)=Z^{\varepsilon}_t(\omega,y,0).
\end{align}
Indeed, by definition of $Z_{\varepsilon}$
\begin{align}\label{eq:ZZ3}
     Z^{\varepsilon}_t(Z_s^{\varepsilon}(\omega,y,0),s)&=(\theta_{U^{\bar\omega,\epsilon}_t(0,\bar y,s)}\bar\omega,W^{\bar\omega,\epsilon}_t(0,\bar y,s)),
\end{align}
with $\bar\omega=\theta_{U^{\omega,\epsilon}_s(0,y,0)}\omega$ and $\bar y=W^{\omega,\epsilon}_s(0,y,0)$.
By \eqref{eq:displ_sol_IRd} \begin{align*}
    U^{\bar\omega,\epsilon}_t(0,\bar y,s)=U^{\omega,\epsilon}_t(U^{\omega,\epsilon}_s(0,y,0),\bar y,s)-U^{\omega,\epsilon}_s(0,y,0)=U^{\omega,\epsilon}_t(X^{\omega,\epsilon}_s(0,y,0),s)-U^{\omega,\epsilon}_s(0,y,0).
\end{align*}
Equation \eqref{eq:uniq_sol} shows that $U^{\omega,\epsilon}_t(X^{\omega,\epsilon}_s(0,y,0),s)=U^{\omega,\varepsilon}_t(0,y,0)$ and hence  \begin{align}\label{eq:ZZ1}
\theta_{U^{\bar\omega,\epsilon}_t(0,\bar y,s)}\bar\omega=\theta_{U^{\omega,\varepsilon}_t(0,y,0)-U^{\omega,\epsilon}_s(0,y,0)}\theta_{U^{\omega,\epsilon}_s(0,y,0)}\omega=
\theta_{U^{\omega,\varepsilon}_t(0,y,0)}\omega.
\end{align}
Next we show that $W^{\bar\omega,\epsilon}_t(0,\bar y,s)=W^{\omega,\epsilon}_t(0, y,0)$.
Note that by \eqref{eq:displ_sol_IRd}\begin{align*}
    W^{\bar\omega,\epsilon}_t(0,\bar y,s)=W^{\omega,\epsilon}_t(U^{\omega,\epsilon}_s(0,y,0),\bar y,s)=W^{\omega,\epsilon}_t(X^{\omega,\epsilon}_s(0,y,0),s)
\end{align*}
and hence \eqref{eq:uniq_sol} implies that \begin{align}\label{eq:ZZ2}
 W^{\bar\omega,\epsilon}_t(0,\bar y,s)= W^{\omega,\epsilon}_t(0, y,0).
\end{align}
Plugging \eqref{eq:ZZ1} and \eqref{eq:ZZ2} into \eqref{eq:ZZ3} implies \eqref{eq:calc_charact_omega}. 

\medskip

Step 3: Constructing a good test function admissible for \eqref{eq:CE}. Fix a test function $\Psi\in \test$ with $0\leq \Psi\leq 1$ and define \begin{align*}
    \Phi^{\varepsilon}_t(\omega,y)=-\int_t^T\Psi(Z_u^{\epsilon}(\omega,y,t),u)du.
\end{align*}
Then the identity \eqref{eq:calc_charact_omega} yields \begin{align*}
    \Phi^{\varepsilon}_t(\omega,Z^{\varepsilon}_t(\omega,y,0))&=-\int_t^T\Psi(Z_u^{\epsilon}(\omega,y,0),u)du.
\end{align*}
 Differentiation with respect to $t$ yields \begin{align*}
 \Psi_t(Z_t^{\epsilon}(\omega,y,0))&=
\partial_t\Phi^{\varepsilon}_t(Z^{\varepsilon}_t(\omega,y,0))\\
&+ \inner{\nabla_{\Omega} \Phi^{\varepsilon}_t(Z^{\varepsilon}_t(\omega,y,0))}{\partial_tU^{\omega,\epsilon}_t(0,y,0)}\\
&+\inner{\nabla_{\IR^d} \Phi^{\varepsilon}_t(Z^{\varepsilon}_t(\omega,y,0))}{\partial_tW^{\omega,\epsilon}_t(0,y,0)}\\
&=\partial_t\Phi^{\varepsilon}_t(Z^{\varepsilon}_t(\omega,y,0))\\
&+\inner{\nabla \Phi^{\varepsilon}_t(Z^{\varepsilon}_t(\omega,y,0))}{V^{\epsilon}_t(Z^{\varepsilon}_t(\omega,y,0))},
 \end{align*}
where in the last line we used \eqref{eq:ode_IR^d} and the definition of $G_t^{\omega,\varepsilon}$, see \eqref{eq:def_G}.
 Since this holds for all $0<t<T$ and $(\omega,y)\in \Omega\times \IR^d$, we obtain \begin{align}\label{eq:transport_psi}
   \partial_t\Phi^{\varepsilon}_t+\inner{\nabla\Phi_t^{\varepsilon}}{V_t^{\varepsilon}}=\Psi_t   
 \end{align}
 on $(0,T)\times \Omega\times \IR^d$ (cf. \cite[Remark 8.1.5]{AGS08}). Furthermore, $\Phi^{\varepsilon}_T=0$, $0\geq\Phi^{\varepsilon}\geq -T$. 
 
We claim that for suitable functions $\eta_r,\chi_R$ we have $\eta_r(t)\chi_R(x)\Phi^\varepsilon_t(\omega,y)\in\mathcal D$. 
To this end, we need to control the gradients of the function $\Phi^{\varepsilon}_t$.
Note that for $a\in \IR^d$ \begin{align}\label{eq:abc}
    Z^{\epsilon}_u(\theta_a\omega,y,t)&=(\theta_{U^{\omega,\varepsilon}_u(a,y,t)}\omega,W^{\omega,\varepsilon}_u(a,y,t)).
\end{align}
This follows from the definition of $Z^{\varepsilon}$ and from \eqref{eq:displ_sol_IRd}\begin{align*}
    Z^{\epsilon}_u(\theta_a\omega,y,t)&=(\theta_{U^{\theta_a\omega,\epsilon}_u(0,y,t)}\theta_a\omega,W^{\theta_a\omega,\epsilon}_u(0,y,t))\\
    &=(\theta_{U^{\theta_a\omega,\epsilon}_u(0,y,t)+a}\omega,W^{\theta_a\omega,\epsilon}_u(0,y,t))\\
        &=(\theta_{U^{\omega,\epsilon}_u(a,y,t)-a+a}\omega,W^{\omega,\epsilon}_u(a,y,t))\\
         &=(\theta_{U^{\omega,\epsilon}_u(a,y,t)}\omega,W^{\omega,\epsilon}_u(a,y,t)).
\end{align*}
In particular, \eqref{eq:abc} and \eqref{eq:lip_bounds_IR2} imply that for $0\leq t,u\leq T$ the map \begin{align}\label{eq:cont_Z}
    Z^{\varepsilon}_u(\cdot,\cdot,t):\Omega\times \IR^d\to \Omega\times \IR^d
\end{align}
is continuous w.r.t. the metric $\met$.
We have 
\begin{align*}
\Psi(Z^{\varepsilon}_u(\theta_a\omega,y,t),u)
=\Psi(\theta_{U^{\omega,\varepsilon}_u(a,y,t)}\omega,W^{\omega,\varepsilon}_u(a,y,t),u).
\end{align*}
and hence
\begin{align}\label{eq:formula1}
&\nabla_{\Omega}\Psi(Z^{\varepsilon}_u(\omega,y,t),u) \\
    &=(D_x\Bigr|_{x=0}U_u^{\omega,\epsilon}(x,y,t))^T(\nabla_{\Omega}\Psi)(Z^{\varepsilon}_u(\omega,y,t),u)\nonumber\\
    &+(D_x\Bigr|_{x=0}W_u^{\omega,\epsilon}(x,y,t))^T(\nabla_{\IR^d}\Psi)(Z^{\varepsilon}_u(\omega,y,t),u)\nonumber,
\end{align}
where $(D_x\Bigr|_{x=0}U_u^{\omega,\epsilon}(x,y,t))^T$ is the transposed Jacobian of $U_u^{\omega,\epsilon}(\cdot,y,t)$ evaluated at $0$ and similarly for the other term. 
Here we used that $\nabla \Psi_u$ is continuous w.r.t. the metric $\met$.
We claim that for fixed $0\leq t\leq u\leq T$ the Jacobian $D_x\Bigr|_{x=0}U_u^{\omega,\epsilon}(x,y,t)$ is continuous in $\omega\in \Omega$ and $y\in \IR^d$ with respect to the metric $\met$.
Fix $0\leq t\leq u\leq T$ and let $z_n\xrightarrow{n\to \infty}0$ and $y_n\xrightarrow{n\to \infty}y$. We have to show \[
D_x\Bigr|_{x=0}U_u^{\theta_{z_n}\omega,\epsilon}(x,y_n,t)\xrightarrow{n\to \infty}D_x\Bigr|_{x=0}U_u^{\omega,\epsilon}(x,y,t).
\]
Equation \eqref{eq:displ_sol_IRd} implies that \[
U_u^{\theta_{z_n}\omega,\epsilon}(x,y_n,t)=U_u^{\omega,\epsilon}(x+z_n,y_n,t)-z_n
\]
and hence it suffices to show 
\begin{align}\label{eq:conv_deriv}
D_x\Bigr|_{x=0}U_u^{\omega,\epsilon}(x+z_n,y_n,t)\xrightarrow{n\to \infty}D_x\Bigr|_{x=0}U_u^{\omega,\epsilon}(x,y,t).
\end{align}
Note that $U_u^{\omega,\epsilon}(x+z_n,y_n,t)$ and $U_u^{\omega,\epsilon}(x,y,t)$ are the first components of solutions to an ODE with smooth vector field $G^{\omega,\varepsilon}$ (cf. \eqref{eq:ode_IR^d}) and initial values $(x+z_n,y_n)$ and $(x,y)$ respectively. Standard facts about dependence of ODEs  on initial conditions (cf. \cite[Theorem 4.1]{Hartman2002OrdinaryDE}) then imply \eqref{eq:conv_deriv}. 
The same reasoning applies to the term $D_x\Bigr|_{x=0}W_u^{\omega,\epsilon}(x,y,t)$. Hence, \eqref{eq:cont_Z} implies that $\nabla_{\Omega}\Psi(Z^{\varepsilon}_u(\omega,y,t),u)$ is continuous with respect to $\met$.

With the same reasoning \begin{align}\label{eq:formula2}
&\nabla_{\IR^d}\Psi(Z^{\varepsilon}_u(\omega,y,t),u) \\
    &=(D_yU_u^{\omega,\epsilon}(0,y,t))^T(\nabla_{\Omega}\Psi)(Z^{\varepsilon}_u(\omega,y,t),u)\nonumber\\
    &+(D_yW_u^{\omega,\epsilon}(0,y,t))^T(\nabla_{\IR^d}\Psi)(Z^{\varepsilon}_u(\omega,y,t),u)\nonumber
\end{align}
and $\nabla_{\IR^d}\Psi(Z^{\varepsilon}_u(\omega,y,t),u)$ is continuous with respect to $\met$.
Thus $\nabla\Psi(Z^{\varepsilon}_u(\omega,y,t),u)$ is continuous with respect to $\met$ and, since $\Psi\in \test$, the   formulas \eqref{eq:formula1}, \eqref{eq:formula2} and  \eqref{eq:lip_bounds_IR2} imply that  
\begin{align*}
   \sup_{\epsilon>0} \sup_{[0,T]\times \Omega\times \IR^d}\norm{\nabla_{\Omega}\Psi(Z^{\varepsilon}_u(\omega,y,t),u)}+\norm{\nabla_{\IR^d}\Psi(Z^{\varepsilon}_u(\omega,y,t),u)}<\infty.
\end{align*}Since \begin{align*}
    \nabla \Phi^{\varepsilon}_t(\omega,y)
    =-\int_t^T\nabla \Psi(Z^{\epsilon}_u(\omega,y,t),u)du,
\end{align*}
 we obtain that 
 \begin{align}\label{eq:unif_bound_grad}
   \sup_{\epsilon>0} \sup_{[0,T]\times \Omega\times \IR^d}\norm{\nabla\Phi^{\varepsilon}_t(\omega,y))}<\infty,
\end{align}
where again $\nabla \Phi^{\varepsilon}_t=(\nabla_{\Omega} \Phi^{\varepsilon}_t,\nabla_{\IR^d} \Phi^{\varepsilon}_t)$.
Furthermore, since for every $0\leq u\leq T$ the function $\nabla \Psi(Z^{\epsilon}_u(\omega,y,t),u)$ is continuous with respect to the metric $\met$, also $\nabla   \Phi^{\varepsilon}_t$ is  continuous w.r.t.\ the metric $\met$.
Next, we turn to the time derivative of $\Phi^{\varepsilon}_t(\omega,y)$. We have  
\begin{align*}
    \partial_t\Phi^{\varepsilon}_t(\omega,y)&=
    \Psi(Z^{\epsilon}_t(\omega,y,t),t)
    -\int_t^T\partial_t \Psi(Z^{\epsilon}_u(\omega,y,t),u)du\\
    &=
    \Psi(\omega,y,t)
    -\int_t^T\partial_t \Psi(\theta_{U_u^{\omega,\epsilon}(0,y,t)}\omega,W_u^{\omega,\epsilon}(0,y,t),u)du\\
    &=
    \Psi(\omega,y,t)
    +\int_t^T\inner{(\nabla\Psi)(\theta_{U_u^{\omega,\epsilon}(0,y,t)}\omega,W_u^{\omega,\epsilon}(0,y,t),u)}{G^{\omega,\epsilon}_t(U_u^{\omega,\epsilon}(0,y,t),W_u^{\omega,\epsilon}(0,y,t))}du\\
    &=
    \Psi(\omega,y,t)
    +\int_t^T\inner{(\nabla\Psi)(Z^{\epsilon}_u(\omega,y,t),u)}{V^{\varepsilon}_t(Z^{\epsilon}_u(\omega,y,t)))}du.
\end{align*}
Hence, by \eqref{eq:unif_bound_vepislon},
\begin{align}\label{eq:unif_bound_t}
    \sup_{(0,T)\times \Omega\times \IR^d}\abs{\partial_t\Phi^{\varepsilon}_t(\omega,y)}\leq \sup_{(0,T)\times \Omega\times \IR^d}\abs{\Psi_t(\omega,y)}+T
\sup_{(0,T)\times \Omega\times \IR^d}\norm{\nabla\Psi_t(\omega,y)}\sup_{(0,T)\times \Omega\times \IR^d}\norm{V^{\varepsilon}_t(\omega,y)}<\infty.
\end{align}
For $r>0$ let $\eta_{r}\in C^{\infty}_c((0,T))$ such that \begin{align}\label{eq:int_by_parts}
    0\leq \eta_r\leq 1, \; \lim_{r\to 0}\eta_r(t)=\eins_{(0,T)}(t)  
\end{align}
and such that for all continuous $g:[0,T]\to \IR$ we have 
\[
\int_0^T(\partial_t\eta_r(t))g(t)dt\xrightarrow{r\to 0}g(0)-g(T).
\]
Furthermore, let\begin{align*}
    \chi_R=\chi(\cdot/R)\in C^{\infty}_c(\IR^d)
\end{align*}
be  a smooth  function
such that $0\leq \chi_R\leq 1$, $\norm{\nabla \chi_R}\leq 2/R$, $\chi_R=1$ on $B_R(0)$ and $\chi_R=0$ on $\IR^d\setminus B_{2R}(0)$.
Then it follows from \eqref{eq:unif_bound_grad} and \eqref{eq:unif_bound_t} that $\eta_r(t)\chi_R(y)\Phi_t^{\varepsilon}(\omega,y)\in \test$.

\medskip

Step 4: Using the continuity equation. Set $\pi_2(x,y)=y$ for $(x,y)\in \IR^d\times \IR^d$. Since $((\IP_t)_{t\in [0,T]},(V_t)_{t\in [0,T]})$ satisfies \eqref{eq:CE}\begin{align*}
    0&=\int_0^T\int \partial_t\left(\eta_r(t)\chi_R(y)\Phi^{\varepsilon}_t(\omega,y)\right)d\IP_t(\omega,y)dt\\
    &+\int_0^T\int \nabla\left(\eta_r(t)\chi_R(y)\Phi^{\varepsilon}_t(\omega,y)\right)V_t(\omega,y)d\IP_t(\omega,y)dt\\
    &=\int_0^T\int \eta_r(t)\chi_R(y)\left(\partial_t\Phi^{\varepsilon}_t(\omega,y)+\nabla\Phi^{\varepsilon}_t(\omega,y)V_t(\omega,y)\right)d\IP_t(\omega,y)dt\\
    &+\int_0^T\int \eta_r(t)\nabla_{\IR^d}\chi_R(y)\Phi^{\varepsilon}_t(\omega,y)\pi_2\left(V_t(\omega,y)\right)d\IP_t(\omega,y)dt\\
    &+\int_0^T\int \partial_t\eta_r(t)\Phi^{\varepsilon}_t(\omega,y)\chi_R(y)d\IP_t(\omega,y)dt.
\end{align*}
Note that the map 
 $t\mapsto \int \Phi^{\varepsilon}_t(\omega,y)\chi_R(y)d\IP_t(\omega,y)$ is continuous, since\begin{align*}
     &\int \Phi^{\varepsilon}_{t+h}(\omega,y)\chi_R(y)d\IP_{t+h}(\omega,y)\\&=
     \int(\Phi^{\varepsilon}_{t+h}(\omega,y)-\Phi^{\varepsilon}_{t}(\omega,y))\chi_R(y)d\IP_{t+h}(\omega,y)+\int\Phi^{\varepsilon}_{t}(\omega,y)\chi_R(y)d\IP_{t+h}(\omega,y).
 \end{align*} 
 From \eqref{eq:unif_bound_t} it follows that  there exists a constant $K>0$ such that $\norm{\Phi^{\varepsilon}_{t+h}(\omega,y)-\Phi^{\varepsilon}_{t}(\omega,y)}\leq h K$ and thus \[
 \int(\Phi^{\varepsilon}_{t+h}(\omega,y)-\Phi^{\varepsilon}_{t}(\omega,y))\chi_R(y)d\IP_{t+h}(\omega,y)\xrightarrow{h\to 0}0.
\] By continuity of $\Phi^{\varepsilon}$ and the curve $(\IP_t)_{0\leq t\leq T}$\[
 \int\Phi^{\varepsilon}_{t}(\omega,y)\chi_R(y)d\IP_{t+h}(\omega,y)\xrightarrow{h\to 0}
 \int\Phi^{\varepsilon}_{t}(\omega,y)\chi_R(y)d\IP_{t}(\omega,y).
 \]
Then, by the choice of the functions $\eta_r$ and the continuity of the map $t\mapsto \int \Phi^{\varepsilon}_t(\omega,y)\chi_R(y)d\IP_t(\omega,y)$
\begin{align*}
    \int_0^T\int \partial_t\eta_r(t)\Phi^{\varepsilon}_t(\omega,y)\chi_R(y)d\IP_t(\omega,y)dt\xrightarrow{r\to 0}
    &\int \Phi^{\varepsilon}_0(\omega,y)\chi_R(y)d\IP_0(\omega,y)
    -\int \Phi^{\varepsilon}_T(\omega,y)\chi_R(y)d\IP_T(\omega,y)\\
    &= \int \Phi^{\varepsilon}_0(\omega,y)\chi_R(y)d\IP_0(\omega,y),
\end{align*}
where the equality follows from the fact that  $\Phi^{\varepsilon}_T=0$. Hence, letting $r\to 0$ yields 
\begin{align*}
    0&=\int_0^T\int \chi_R(y)\left(\partial_t\Phi^{\varepsilon}_t(\omega,y)+\nabla\Phi^{\varepsilon}_t(\omega,y)V_t(\omega,y)\right)d\IP_t(\omega,y)dt\\
    &+\int_0^T\int \nabla_{\IR^d}\chi_R(y)\Phi^{\varepsilon}_t(\omega,y) \pi_2\left(V_t(\omega,y)\right)d\IP_t(\omega,y)dt\\
    &+\int \Phi^{\varepsilon}_0(\omega,y)\chi_R(y)d\IP_0(\omega,y).
\end{align*}

Since $\Phi^{\varepsilon}$ solves the equation \eqref{eq:transport_psi} \begin{align*}
   0&=\int_0^T\int \chi_R(y)\left(\Psi_t(\omega,y)+\nabla\Phi^{\varepsilon}_t(\omega,y)[V_t(\omega,y)-V^{\epsilon}_t(\omega,y)]\right)d\IP_t(\omega,y)dt\\
    &+\int_0^T\int \nabla_{\IR^d}\chi_R(y)\Phi^{\varepsilon}_t(\omega,y) \pi_2\left(V_t(\omega,y)\right)d\IP_t(\omega,y)dt\\
    &+\int \Phi^{\varepsilon}_0(\omega,y)\chi_R(y)d\IP_0(\omega,y).
\end{align*}
Since $\Phi_0\leq 0$ and $\IP_0\leq 0$ this implies \begin{align*}
    0&\geq\int_0^T\int \chi_R(y)\left(\Psi_t(\omega,y)+\nabla\Phi^{\varepsilon}_t(\omega,y)[V_t(\omega,y)-V^{\epsilon}_t(\omega,y)]\right)d\IP_t(\omega,y)dt\\
    &+\int_0^T\int \nabla_{\IR^d}\chi_R(y)\Phi^{\varepsilon}_t(\omega,y) \pi_2\left(V_t(\omega,y)d\IP_t(\omega,y)\right)dt\\
    &\geq \int_0^T\int \chi_R(y)\left(\Psi_t(\omega,y)+\nabla\Phi^{\varepsilon}_t(\omega,y)[V_t(\omega,y)-V^{\epsilon}_t(\omega,y)]\right)d\IP_t(\omega,y)dt\\
    &-T\int_0^T\int \norm{\nabla_{\IR^d}\chi_R(y)}
\norm{V_t(\omega,y)}d\abs{\IP}_t(\omega,y)dt,
\end{align*}
where in the last line we used that $0\geq \Phi^{\epsilon}\geq -T$.

From the uniform  bound \eqref{eq:unif_bound_grad} on $\nabla\Phi^{\varepsilon}_t(\omega,y)$ and the fact that $V_t(\omega,y)=V_t'(\omega,y)$ on $[0,T]\times\supp{\chi_R}\subset [0,T]\times B_{2R}(0)$, see \eqref{eq:def_V'}, we obtain for $\varepsilon\to 0$\begin{align*}
    &\int_0^T\int \chi_R(y)\nabla\Phi^{\varepsilon}_t(\omega,y)[V_t(\omega,y)-V^{\epsilon}_t(\omega,y)]d\IP_t(\omega,y)dt\to 0.
\end{align*}
Hence \begin{align*}
    \int_0^T\int \chi_R(y)\Psi_t(\omega,y)d\IP_t(\omega,y)dt&\leq T\int_0^T\int \norm{\nabla_{\IR^d}\chi_R(y)}\norm{V_t(\omega,y)}d\abs{\IP_t}(\omega,y)dt\\
    &\leq \frac{2T}{R}\int_0^T\int \norm{V_t(\omega,y)}\eins_{1\leq \norm{y}\leq 2R}d\abs{\IP_t}(\omega,y)dt\\
    &\leq \frac{2T}{R}\int_0^T\int \norm{V_t}d\abs{\IP_t}dt.
\end{align*}
Letting $R\to \infty$ yields 
\[
\int_0^T\int \Psi_t(\omega,y)d\IP_t(\omega,y)dt\leq 0.
\]
By arbitrariness of $\Psi(\geq 0)$ this yields $\IP_t\leq 0$ for all $0\leq t\leq T$.
\end{proof}

\begin{prop}\label{prop:repres_form}
Let $((\IP_t)_{t\in [0,T]},(V_t)_{t\in [0,T]})$ satisfy \eqref{eq:CE} and $(\IP_t)_{t\in [0,T]}$ be a curve of probability measures.   Assume that for all compact sets $B\subset \IR^d$ \begin{align}\label{eq:usual_assumptions}
        \int_0^T\int \norm{V_t}d\IP_tdt<\infty
    \text{ and }
        \int_0^T\sup_{\Omega\times B}\norm{V_t}+Lip(V_t,\Omega\times B)dt<\infty.
    \end{align}
    For $\omega\in \Omega$ and $t\in [0,T]$ define $G_t^{\omega}:\IR^d\times \IR^d\to \IR^d\times \IR^d $ by $G^{\omega}_t(x,y)=V_t(\theta_x\omega,y)$. 
    Then for $\IP_0$-a.e. $(\omega,y)\in \Omega\times \IR^d$ the solution of the ODE \begin{align}\label{eq:ODE}
        \partial_t X_t^{\omega}(0,y)=G^{\omega}_t(X_t^{\omega}(0,y)) \text{ and } X_0^{\omega}(0,y)=(0,y)
    \end{align} 
    is defined on the time interval $[0,T]$. Furthermore, \begin{align}
        (Z_t)_{\#}\IP_0=\IP_t,
    \end{align}
    where  $Z_t(\omega,y)=(\theta_{U_t^{\omega}(0,y)}\omega,W_t^{\omega}(0,y))$ and $U^{\omega}$ and $W^{\omega}$ are defined by $X_t^{\omega}(0,y)=(U_t^{\omega}(0,y),W_t^{\omega}(0,y))$.
\end{prop}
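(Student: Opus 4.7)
The strategy I would follow adapts \cite[Prop.~8.1.8]{AGS08} to the extended metric space $(\Omega\times\IR^d,\met)$, reusing the infrastructure developed in the proof of Proposition~\ref{prop:comparison}. The first task is to build the flow. Under \eqref{eq:usual_assumptions}, the vector field $G_t^\omega(x,y)=V_t(\theta_x\omega,y)$ on $\IR^{2d}$ is measurable and locally Lipschitz in $(x,y)$ with $L^1$-in-$t$ Lipschitz constant (because $\met_\Omega(\theta_x\omega,\theta_{x'}\omega)\leq \norm{x-x'}$, so the $\met$-Lipschitz constants of $V_t$ transfer to Euclidean Lipschitz constants of $G_t^\omega$). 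Hence \cite[Lem.~8.1.4]{AGS08} yields a unique maximal solution of \eqref{eq:ODE} for every starting point $(\omega,y)$. The invariance identity $X_t^{\theta_a\omega}(x-a,y)=X_t^\omega(x,y)-(a,0)$ derived in the proof of Proposition~\ref{prop:comparison} (see \eqref{eq:displ_sol_IRd}) shows that $Z_t(\omega,y)$ depends only on $(\omega,y)$ and obeys the semigroup identity $Z_u\circ Z_s=Z_u$, as in \eqref{eq:calc_charact_omega}.

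The core step is to identify $\tilde{\IP}_t:=(Z_t)_{\#}\IP_0$ with $\IP_t$. For any $\Phi\in\test$, the chain rule together with \eqref{eq:ODE} and the very definition of $\nabla_\Omega,\nabla_{\IR^d}$ (applied to the curve $t\mapsto(\theta_{U_t^\omega(0,y)}\omega,W_t^\omega(0,y))$) gives
\begin{align*}
\frac{d}{dt}\int \Phi_t\, d\tilde{\IP}_t
= \int\Bigl[\partial_t\Phi_t(Z_t)+\inner{\nabla\Phi_t(Z_t)}{G_t^\omega(X_t^\omega(0,y))}\Bigr]d\IP_0
= \int\Bigl[\partial_t\Phi_t+\inner{\nabla\Phi_t}{V_t}\Bigr]\,d\tilde{\IP}_t.
\end{align*}
Integrating in $t$ and using that $\Phi_t$ vanishes near $0$ and $T$, the pair $((\tilde{\IP}_t),(V_t))$ solves \eqref{eq:CE}. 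Since $\tilde{\IP}_0=\IP_0$, applying Proposition~\ref{prop:comparison} to the two signed curves $\pm(\tilde{\IP}_t-\IP_t)$ yields $\tilde{\IP}_t=\IP_t$, provided its hypotheses are verified for $\abs{\tilde{\IP}_t-\IP_t}\leq \IP_t+\tilde{\IP}_t$. The $\IP_t$-piece is covered by \eqref{eq:usual_assumptions}; the $\tilde{\IP}_t$-piece amounts to the displacement bound $\int_0^T\!\!\int\norm{V_t}d\tilde{\IP}_t\,dt=\int\!\int_0^T\norm{\partial_tX_t^\omega(0,y)}dt\,d\IP_0<\infty$.

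The genuine obstacle is non-explosion of the flow together with the displacement bound just mentioned: \emph{a priori} the ODE \eqref{eq:ODE} could leave every ball in $\IR^{2d}$ in finite time, since $V_t$ is only locally Lipschitz. To handle this cleanly I would first replace $V$ by the mollification $V^\varepsilon$ used in the proof of Proposition~\ref{prop:comparison}, which is globally Lipschitz and bounded. The resulting flow $Z_t^\varepsilon$ is defined on all of $[0,T]$, the chain-rule argument above shows that $\IP_t^\varepsilon:=(Z_t^\varepsilon)_{\#}\IP_0$ solves the continuity equation with velocity $V_t^\varepsilon$, and the comparison principle applied to $\IP_t^\varepsilon-\IP_t$ (the cross-term $\inner{V_t-V_t^\varepsilon}{\nabla\Phi}$ is handled exactly as in Step~4 of the proof of Proposition~\ref{prop:comparison}, using \eqref{eq:usual_assumptions}) forces $\IP_t^\varepsilon\to \IP_t$. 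The bound $\int_0^T\!\!\int\norm{V_t}d\IP_t\,dt<\infty$, transported back via the $\varepsilon$-flow, then provides a $\varepsilon$-uniform $L^1(\IP_0)$-bound on $\int_0^T\norm{\partial_tX_t^{\omega,\varepsilon}}dt$, allowing the passage $\varepsilon\to 0$ and giving simultaneously global existence of the unregularized flow for $\IP_0$-a.e.\ $(\omega,y)$ and the identity $(Z_t)_{\#}\IP_0=\IP_t$.
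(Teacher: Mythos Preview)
Your overall plan (push forward along the characteristic flow, verify the continuity equation for the pushforward, then invoke the comparison principle) is exactly the paper's, and you correctly identify the only real obstacle: non-explosion of the unregularised flow together with the displacement bound $\int\!\int_0^T\|\partial_tX_t^\omega\|\,dt\,d\IP_0<\infty$.

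Where your proposal diverges from the paper is in how that obstacle is dispatched, and here your regularisation argument has a gap. You want to apply Proposition~\ref{prop:comparison} to $\IP_t^\varepsilon-\IP_t$. But $\IP_t^\varepsilon$ solves \eqref{eq:CE} with $V_t^\varepsilon$, not with $V_t$, so the difference does \emph{not} satisfy \eqref{eq:CE} with a single velocity: in weak form it carries the extra source $\int\langle V_t^\varepsilon-V_t,\nabla\Phi\rangle\,d\IP_t^\varepsilon$. This is \emph{not} the situation of Step~4 of Proposition~\ref{prop:comparison}: there the cross term is integrated against the \emph{given} curve $\IP_t$ with known integrability, whereas here it is integrated against $\IP_t^\varepsilon$, whose control is precisely what is at stake. (Recall also that the mollification $V^\varepsilon$ in that proof is built from the truncation $V'=f_RV$, so a second parameter $R$ is hiding in your $\varepsilon$.) The subsequent claim that ``$\int_0^T\!\int\|V_t\|d\IP_t\,dt<\infty$, transported back via the $\varepsilon$-flow'' yields a bound uniform in $\varepsilon$ is likewise unjustified: $\int_0^T\!\int\|V_t^\varepsilon\|d\IP_t^\varepsilon\,dt$ is what you would need to bound, and weak convergence alone does not give this.

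The paper avoids regularisation altogether by a one-sided trick. Let $\tau(\omega,y)$ be the maximal existence time of \eqref{eq:ODE} and $E_u=\{\tau>u\}$. On $E_u$ the flow $Z_t$ is defined for $t\le u$, so $\tilde\IP_t^u:=(Z_t)_\#(\eins_{E_u}\IP_0)$ makes sense and, by the chain-rule computation you already wrote, solves \eqref{eq:CE} on $[0,u]$ with the \emph{same} velocity $V_t$. Proposition~\ref{prop:comparison} applied once to $\tilde\IP_t^u-\IP_t$ (no source term now) gives $\tilde\IP_t^u\le\IP_t$. That inequality alone yields
\[
\int\sup_{t<\tau}\|X_t^\omega(0,y)-(0,y)\|\,d\IP_0
\le\int_0^T\!\!\int_{E_t}\|V_t(Z_t)\|\,d\IP_0\,dt
\le\int_0^T\!\!\int\|V_t\|\,d\IP_t\,dt<\infty,
\]
so the maximal solution stays bounded and hence $\tau=T$ for $\IP_0$-a.e.\ $(\omega,y)$. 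Then $E_u=\Omega\times\IR^d$, the inequality reads $(Z_t)_\#\IP_0\le\IP_t$, and since both sides are probability measures equality follows; no second application of comparison to $-(\tilde\IP_t-\IP_t)$ is needed. This is both shorter and sidesteps the source-term issue entirely.
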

\begin{proof}
    Let $\tau(\omega,y)$ be the length of the time interval of the maximal solution of the ODE \eqref{eq:ODE} and set $E_u=\{(\omega,y)\in \Omega\times \IR^d: \tau(\omega,y)>u\}$. We claim that the curve $[0,u]\ni t\mapsto (Z_t)_{\#}(\eins_{E_u}\IP_0)$ solves the continuity equation \eqref{eq:CE} w.r.t. the vector fields $(V_t)_{0\leq t\leq u}$. Let $\Phi\in \test$, then for $(\omega,y)\in E_u$ and $t<u$ \begin{align*}
        \partial_t \Phi_t(Z_t(\omega,y))&=\partial_t \Phi_t(\theta_{U_t^{\omega}(0,y)}\omega,W_t^{\omega}(0,y))\\
        &=[\partial_t \Phi_t](\theta_{U_t^{\omega}(0,y)}\omega,W_t^{\omega}(0,y))+\inner{[\nabla_{\Omega}\Phi_t](\theta_{U_t^{\omega}(0,y)}\omega,W_t^{\omega}(0,y))}{\partial_t U_t^{\omega}(0,y)}\\
        &+\inner{[\nabla_{\IR^d}\Phi_t](\theta_{U_t^{\omega}(0,y)}\omega,W_t^{\omega}(0,y))}{\partial_t W_t^{\omega}(0,y)}\\
        &= [\partial_t \Phi_t](Z_t(\omega,y))+ \inner{[\nabla\Phi_t](Z_t(\omega,y))}{G^{\omega}_t(X_t^{\omega}(0,y))}\\
         &= [\partial_t \Phi_t](Z_t(\omega,y))+ \inner{[\nabla\Phi_t](Z_t(\omega,y))}{V_t(Z_t(\omega,y))}.
    \end{align*}
We obtain \begin{align*}
    &\int_0^1\int_{\Omega\times \IR^d} \partial_t \Phi_t + \inner{V_t} {\nabla \Phi_t} d(Z_t)_{\#}(\eins_{E_u}\IP_0) dt\\
    &=\int_{E_u}\int_0^1 [\partial_t \Phi_t](Z_t(\omega,y))+ \inner{[\nabla\Phi_t](Z_t(\omega,y))}{V_t(Z_t(\omega,y))} dt d\IP_0(\omega,y)\\
     &=\int_{E_u}\int_0^1 \partial_t \Phi_t(Z_t(\omega,y)) dt d\IP_0(\omega,y)\\
     &= \int_{E_u} \Phi_1(Z_1(\omega,y))- \Phi_0(Z_0(\omega,y))  d\IP_0(\omega,y)\\
     &=0,
\end{align*}
where the last equality follows from the fact that $\Phi_1=\Phi_0=0$, since $\Phi\in \test$.

Hence, Proposition \ref{prop:comparison} implies that for all $0\leq t\leq u$ \begin{align}\label{eq:repres_compa}
    (Z_t)_{\#}(\eins_{E_u}\IP_0)\leq \IP_t
\end{align}and we have\begin{align*}
    \int \sup_{0\leq t\leq \tau}\norm{X_t^{\omega}(0,y)-(0,y)}d\IP_0(\omega,y)&\leq \int \int_0^{\tau(\omega,y)}\norm{\partial_tX^{\omega}_t(0,y)}dtd\IP_0(\omega,y)\\
    &= \int \int_0^{\tau(\omega,y)}\norm{G^{\omega}_t(X^{\omega}_t(0,y))}dtd\IP_0(\omega,y)\\
    &=\int \int_0^{\tau(\omega,y)}\norm{V_t(Z_t(\omega,y))}dtd\IP_0(\omega,y)\\
    &=\int_0^T\int_{E_t}\norm{V_t(Z_t(\omega,y))}d\IP_0(\omega,y)dt\\
    &\stackrel{\eqref{eq:repres_compa}}{\leq}
    \int_0^T\int\norm{V_t(\omega,y)}d\IP_t(\omega,y)dt\\
    &< \infty.
\end{align*}
    In particular, for $\IP_0$-a.e. $(\omega,y)$ the maximal solution of the ODE \eqref{eq:ODE} is bounded on its domain and hence it is defined on $[0,T]$. This implies that $E_u=\Omega\times \IR^d$ for $0\leq u<T$ and hence \eqref{eq:repres_compa} shows that $(Z_t)_{\#}\IP_0\leq \IP_t$ for  $0\leq t\leq T$.
    Since $(Z_t)_{\#}\IP_0$ and $\IP_t$ are probability measures, this yields  $(Z_t)_{\#}\IP_0= \IP_t$ for  $0\leq t\leq T$.
\end{proof}

Let $AC((0,T), \IR^d\times \IR^d)$ be the space of absolutely continuous curves $\gamma:(0,T)\to \IR^d\times \IR^d$. The following superposition principle holds.

\begin{cor}\label{cor:superpositionpr}
Let $((\IP_t)_{t\in [0,T]},(V_t)_{t\in [0,T]})$ satisfy \eqref{eq:CE}, where     $(\IP_t)_{t\in [0,T]}$ is a  curve of probability measures and the vector field   $(V_t)_{t\in [0,T]}$ satisfies the assumptions \eqref{eq:usual_assumptions}.  
    Then there exists a probability measure $\Lambda$ on $\Omega \times C([0,T], \IR^d\times\IR^d)$ such that 
    \begin{enumerate}[i)]
    \item The measure $\Lambda$ is concentrated on the set of pairs $(\omega,(u_t,w_t)_{t\in [0,T]})$
    such that $(u_t,w_t)_{t\in [0,T]}\in AC((0,T), \IR^d\times \IR^d)$ is a solution of the ODE $(\dot{u}_t,\dot{w}_t)=G^{\omega}_t(u_t,w_t)$ for $\Leb$-a.e. $t\in (0,T)$, with $y_0=0$;
    \item $\IP_t=(F_t)_{\#}\Lambda$, where $F_t:\Omega\times  C([0,T], \IR^d\times \IR^d)\to \Omega\times \IR^d, (\omega,(u_t,w_t)_{t\in [0,T]})\mapsto (\theta_{u_t}\omega,w_t)$.
\end{enumerate}
\end{cor}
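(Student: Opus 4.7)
The plan is to obtain $\Lambda$ as the pushforward of $\IP_0$ under the characteristics map associated with $(V_t)$. All the analytic work has already been carried out in Proposition \ref{prop:repres_form}: under the assumptions \eqref{eq:usual_assumptions}, for $\IP_0$-a.e.\ $(\omega, y)\in \Omega\times \IR^d$, the Cauchy problem
\begin{equation*}
\partial_t X_t^\omega(0,y) = G_t^\omega(X_t^\omega(0,y)), \qquad X_0^\omega(0,y) = (0,y),
\end{equation*}
admits a solution on the whole of $[0,T]$, and the flow $Z_t(\omega,y) = (\theta_{U_t^\omega(0,y)}\omega, W_t^\omega(0,y))$ satisfies $(Z_t)_\# \IP_0 = \IP_t$ for every $t \in [0,T]$.

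With this in hand I would define the \emph{characteristics map}
\begin{equation*}
J:\Omega\times \IR^d \to \Omega\times C([0,T],\IR^d\times\IR^d), \qquad J(\omega,y) = \bigl(\omega,\,(X_t^\omega(0,y))_{t\in[0,T]}\bigr),
\end{equation*}
on the full $\IP_0$-measure set where the solution exists up to time $T$, and extend it arbitrarily outside. Set $\Lambda := J_\#\IP_0$. Property (i) is then immediate by construction: for $\Lambda$-a.e.\ pair $(\omega,(u_t,w_t))$ the curve $(u_t,w_t)=X_t^\omega(0,y)$ solves $(\dot u_t,\dot w_t)=G_t^\omega(u_t,w_t)$ with $u_0=0$ (and $w_0=y$), and it is absolutely continuous because $t\mapsto G_t^\omega(X_t^\omega(0,y))$ is locally integrable on $[0,T]$ thanks to the second bound in \eqref{eq:usual_assumptions}. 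For property (ii), observe that
\begin{equation*}
F_t \circ J(\omega,y) = \bigl(\theta_{U_t^\omega(0,y)}\omega,\,W_t^\omega(0,y)\bigr) = Z_t(\omega,y),
\end{equation*}
so that $(F_t)_\#\Lambda = (F_t\circ J)_\#\IP_0 = (Z_t)_\#\IP_0 = \IP_t$ by Proposition \ref{prop:repres_form}.

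The only nonroutine point is the measurability of $J$ into $\Omega\times C([0,T],\IR^d\times\IR^d)$, which is needed to make sense of the pushforward. I expect this to be handled via standard ODE arguments: for each fixed $\omega$, continuous dependence on initial data (using the local Lipschitz bounds in \eqref{eq:usual_assumptions} together with Gronwall) shows that $y\mapsto X_\cdot^\omega(0,y)$ is continuous into $C([0,T],\IR^d\times\IR^d)$; joint measurability in $(\omega,y)$ follows by a Picard iteration, since each iterate is measurable in $(\omega,y)$ (using measurability of $V_t$ together with the measurability of the flow $\theta$) and the iterates converge uniformly on $[0,T]$. A slicker alternative would be to mollify $V$ as in Step~1 of the proof of Proposition \ref{prop:comparison} so that the regularised vector fields become jointly continuous in $(\omega,y)$ with respect to $\met$, deduce continuity of the associated characteristic maps, and then pass to the limit using the uniqueness of the limiting flow obtained in Proposition \ref{prop:repres_form}. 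This is the only step that requires some care; everything else is a bookkeeping exercise on top of Proposition \ref{prop:repres_form}.
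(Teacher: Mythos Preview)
Your proposal is correct and follows exactly the paper's approach: define $\Lambda$ as the pushforward of $\IP_0$ under $(\omega,y)\mapsto(\omega,(X^{\omega}_t(0,y))_{0\le t\le T})$ and read off properties (i) and (ii) from Proposition~\ref{prop:repres_form}. The paper's proof is in fact shorter than yours, as it does not explicitly address the measurability of $J$; your extra paragraph on that point is a reasonable elaboration rather than a deviation.
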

\begin{proof}
    We use the same notation as in Proposition \ref{prop:repres_form}. Define the measure $\Lambda$ as the pushforward of $\IP_0$ under the map \begin{align*}
       \Omega\times \IR^d \ni(\omega,x)\mapsto (\omega,(X^{\omega}_t(0,x))_{0\leq t\leq T})\in \Omega \times C([0,T], \IR^d\times\IR^d).
    \end{align*}
    Proposition \ref{prop:repres_form} then implies the listed properties of $\Lambda$.
\end{proof}

We obtain the following formula for the cost function of two random measures.
\begin{thm}\label{prop:Benamou-Brenier}
   For two equivariant random measures $\xi$ and $\eta$ defined on $\Omega$ with the same finite intensity, we have \begin{align}
        \cost_p(\xi,\eta)=\inf \int_0^1 \norm{V_t}_{L^p(\IP_t)}^pdt,
    \end{align}
    where the infimum runs over all solutions of the  continuity equation $((\IP_t)_{t\in [0,1]},(V_t)_{t\in [0,1]})$, where $(\IP_t)_{t\in [0,1]}$ is a curve of probability measures with $(\proj_{\Omega})_{\#}\IP_0=\Q_{\xi}$ and  $(\proj_{\Omega})_{\#}\IP_1=\Q_{\eta}$ and  where $(V_t)_{t\in [0,1]}$ satisfies  the assumptions \eqref{eq:usual_assumptions}.
\end{thm}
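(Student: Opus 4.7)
Proposition \ref{prop:wass_dist_palm} reduces the claim to establishing, for each pair $\IP_0,\IP_1\in\mathcal P(\Omega\times\IR^d)$ with $(\proj_\Omega)_\#\IP_0=\Q_\xi$ and $(\proj_\Omega)_\#\IP_1=\Q_\eta$, the dynamical identity
\[
\mathbb W_p^p(\IP_0,\IP_1) \;=\; \inf \int_0^1\|V_t\|_{L^p(\IP_t)}^p\,dt,
\]
where the infimum runs over solutions $((\IP_t),(V_t))$ of \eqref{eq:CE} with these endpoints and satisfying \eqref{eq:usual_assumptions}; taking the infimum over $(\IP_0,\IP_1)$ then yields the theorem. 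I address the two inequalities separately, following the plan of \cite[Chapter 8]{AGS08} adapted to the metric $\met$ on $\Omega\times\IR^d$.

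\textbf{The inequality $\mathbb W_p^p\leq\int_0^1\|V_t\|_{L^p(\IP_t)}^p dt$.} Given a solution $((\IP_t),(V_t))$ of \eqref{eq:CE}, I would apply the superposition principle (Corollary \ref{cor:superpositionpr}) to obtain a probability measure $\Lambda$ on $\Omega\times C([0,1],\IR^d\times\IR^d)$ concentrated on pairs $(\omega,(u_t,w_t))$ with $u_0=0$ solving $(\dot u_t,\dot w_t)=V_t(\theta_{u_t}\omega,w_t)$, and such that $\IP_t=(F_t)_\#\Lambda$ with $F_t(\omega,(u_\cdot,w_\cdot))=(\theta_{u_t}\omega,w_t)$. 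Then $(F_0,F_1)_\#\Lambda$ is a coupling between $\IP_0$ and $\IP_1$, and since $F_0(\omega,\cdot)=(\omega,w_0)$ while $F_1(\omega,\cdot)=(\theta_{u_1}\omega,w_1)$, the triangle inequality in $\IR^{2d}$ together with $u_1=\int_0^1\dot u_t\,dt$ and $w_1-w_0=\int_0^1\dot w_t\,dt$ yields
\[
\met(F_0,F_1)\;\leq\;\sqrt{\|u_1\|^2+\|w_1-w_0\|^2}\;\leq\;\int_0^1\|(\dot u_t,\dot w_t)\|\,dt.
\]
Raising to the $p$th power, applying Jensen's inequality on $[0,1]$, and integrating against $\Lambda$ produces $\mathbb W_p^p(\IP_0,\IP_1)\leq\int_0^1\|V_t\|_{L^p(\IP_t)}^p\,dt$.

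\textbf{The inequality $\mathbb W_p^p\geq\int_0^1\|V_t\|_{L^p(\IP_t)}^p dt$.} Fix $\varepsilon>0$ and let $\U\in\Cpl(\IP_0,\IP_1)$ be $\varepsilon$-optimal for $\mathbb W_p$. On the set where $\met_\Omega<\infty$ (which carries full $\U$-mass whenever $\mathbb W_p^p(\IP_0,\IP_1)<\infty$), use the measurable minimal-shift selection already constructed in the proof of Proposition \ref{prop:wass_dist_palm} to pick $u=u((\omega_0,z_0),(\omega_1,z_1))$ with $\theta_u\omega_0=\omega_1$ and $\|u\|=\met_\Omega(\omega_0,\omega_1)$. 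Define the geodesic interpolation $e_t((\omega_0,z_0),(\omega_1,z_1))=(\theta_{tu}\omega_0,\,z_0+t(z_1-z_0))$, set $\IP_t=(e_t)_\#\U$, and let $V_t(\omega,y)$ be the $\U$-conditional expectation of $(u,z_1-z_0)$ given $e_t=(\omega,y)$. By the definition of $\nabla_\Omega$ via the flow, the chain rule gives
\[
\tfrac{d}{dt}\Phi_t(e_t) \;=\; \partial_t\Phi_t(e_t)+\inner{(u,z_1-z_0)}{\nabla\Phi_t(e_t)} \qquad \text{for all } \Phi\in\test,
\]
and integrating in $\U$ and in $t$, using the compact support of $\Phi$ in $(0,T)$, yields \eqref{eq:CE}. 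Conditional Jensen then gives
\[
\int_0^1\|V_t\|_{L^p(\IP_t)}^p\,dt \;\leq\; \IE_\U[\met^p] \;\leq\; \mathbb W_p^p(\IP_0,\IP_1)+\varepsilon.
\]

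\textbf{Main obstacle.} The main technical difficulty is that the conditional-expectation vector field $V_t$ constructed in the second step will in general fail the regularity hypothesis \eqref{eq:usual_assumptions} (local boundedness and local Lipschitz continuity w.r.t.\ $\met$) that makes it admissible in the infimum on the right-hand side of the theorem. I would resolve this by a standard approximation: first smooth $\U$ by convolution in the $\IR^d$-factors with an $\eta$-mollifier and truncate spatially so that the resulting interpolation produces admissible vector fields $V_t^\eta$ for which the dynamical cost is finite and approximates $\IE_\U[\met^p]$ from above, then let $\eta\to 0$ and $\varepsilon\to 0$ using the lower semicontinuity of $\mathbb W_p$ (Lemma \ref{lem:wass_lsc}) and the standard lower semicontinuity of the dynamical action functional under joint weak convergence of $(\IP_t,V_t\IP_t)$.
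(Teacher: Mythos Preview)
Your argument for the inequality $\cost_p(\xi,\eta)\leq\inf\int_0^1\|V_t\|_{L^p(\IP_t)}^p\,dt$ via the superposition principle is correct and matches the paper's proof.

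For the reverse inequality, the obstacle you flag is real, but your proposed fix does not close it. Convolving the coupling $\U$ in the $\IR^d$-factors may smooth $y\mapsto V_t(\omega,y)$, but it does nothing to enforce Lipschitz continuity of $\omega\mapsto V_t(\omega,y)$ with respect to $\met_\Omega$, which is part of the requirement \eqref{eq:usual_assumptions}. The conditional expectation of $(u,z_1-z_0)$ given $e_t=(\omega,y)$ depends on the disintegration of $\U$ along the orbit structure of the flow on $\Omega$; mollifying the spatial coordinates of $\U$ does not regularize this dependence. Nor is there an obvious mollification in the $\Omega$-direction compatible with the constraint $(\proj_\Omega)_\#\IP_i=\Q_{\xi},\Q_{\eta}$.

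The paper bypasses this difficulty entirely. Rather than interpolating an arbitrary near-optimal $\U\in\Cpl(\IP_0,\IP_1)$, it works directly with a kernel $T$ from $\Omega$ to $\IR^d$ satisfying \eqref{eq:transp_prop_kernel} (supplied by Corollary \ref{cor:palm_transp}), defines
\[
\IE_{\IP_t}[f]=\IE_{\Q_\xi}\Big[\int f(\theta_{tz}\omega,z)\,T(\omega,dz)\Big],
\]
and takes the \emph{explicit linear} vector field $V_t(\omega,x)=(x,0)\in\IR^{2d}$. The point is that the shift amount $z$ has been stored in the $\IR^d$-coordinate of $\IP_t$, so the velocity along the curve $t\mapsto(\theta_{tz}\omega,z)$ is simply the current position $x=z$ in the first slot and $0$ in the second. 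This $V_t$ is globally $1$-Lipschitz in $\met$ and bounded on $\Omega\times B$ for every compact $B$, so \eqref{eq:usual_assumptions} holds trivially with no approximation needed. One then computes $\|V_t\|_{L^p(\IP_t)}^p=\IE_{\Q_\xi}[\int\|z\|^p\,T(\omega,dz)]$ for every $t$, and optimizing over $T$ gives $\inf\int_0^1\|V_t\|_{L^p(\IP_t)}^p\,dt\leq\cost_p(\xi,\eta)$ directly.
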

\begin{proof}
    By Proposition \ref{prop:wass_dist_palm} 
    \begin{align}
        \cost_p(\xi,\eta)=\inf_{\IP_0,\IP_1}\inf_{\U\in \Cpl(\IP_0,\IP_1)}\IE_{\U}\left[\met^p\right],
    \end{align}
    where the first infimum runs over all probability measures $\IP_i$, $i=0,1$ on $\Omega\times \IR^d$ such that $(\proj_1)_{\#}\IP_0=\Q_{\xi}$ and $(\proj_1)_{\#}\IP_1=\Q_{\eta}$.
    
 Let $((\IP_t)_{0\leq t\leq 1},(V_t)_{0\leq t\leq 1})$ be a solution to the continuity equation \eqref{eq:CE} with $(\proj_1)_{\#}\IP_0=\Q_{\xi}$ and $(\proj_1)_{\#}\IP_1=\Q_{\eta}$, satisfying the assumptions in the statement of this proposition.
 Applying Corollary \ref{cor:superpositionpr} to the curve $(\IP_t)_{0\leq t\leq 1}$ yields
  \begin{align*}
 \mathbb W_p^p(\IP_{0},\IP_{1})&\leq 
 \IE_{\Lambda}\left[\norm{(u_1,w_1)-(u_0,w_0)}^p\right]\\
 &\leq \int_0^1\IE_{\Lambda}\left[\norm{G^{\omega}_t(u_t,w_t)}^p \right]dt\\
 &=\int_0^1\norm{V_t}_{L^p(\IP_t)}^p dt,
 \end{align*}
which proves $\cost_p(\xi,\eta)\leq \inf \int_0^1 \norm{V_t}_{L^p(\IP_t)}^p$.
Assume that $\cost_p(\xi,\eta)<\infty$ and let $T$ be a locally finite kernel satisfying \eqref{eq:transp_prop_kernel}. 
Define the measures $\IP_t$ for $0\leq t\leq 1$ by \[
\IE_{\IP_t}[f]=\IE_{\IP_{\xi}}\left[\int f(\theta_{tz}\omega,z)T(\omega,dz)\right],
\]
where $f:\Omega\times \IR^d\to \IR$ is bounded and measurable. The curve $(\IP_t)_{0\leq t\leq 1}$ is weakly continuous and  $(\proj_1)_{\#}\IP_1=\Q_{\eta}$. Define $V_t(\omega,x)=(x,0)\in \IR^{2d}$. Then $((\IP_t)_{t\in [0,1]},(V_t)_{t\in [0,1]})$ satisfies \eqref{eq:CE},  since for $\Phi\in \test$ \begin{align}\label{eq:CE_geodes}
    &\int_0^1\int_{\Omega\times \IR^d} \partial_t \Phi_t + \inner{V_t} {\nabla \Phi_t} d\IP_t dt\\
    &= \int_0^1\IE_{\IP_{\xi}}\left[\int \left[\partial_t \Phi_t + \inner{V_t} {\nabla \Phi_t}\right](\theta_{tz}\omega,z)T(\omega,dz)\right]dt\nonumber\\
 &= \IE_{\IP_{\xi}}\left[\int \int_0^1\left[\partial_t \Phi_t + \inner{V_t} {\nabla \Phi_t}\right](\theta_{tz}\omega,z) dt T(\omega,dz)\right].\nonumber
\end{align}
Since \[
\frac{d}{dt}\Phi_t (\theta_{tz}\omega,z)=\left[\partial_t \Phi_t + \inner{V_t} {\nabla \Phi_t}\right](\theta_{tz}\omega,z),
\]
and $\Phi_0=\Phi_1=0$,
\eqref{eq:CE_geodes} is equal to $0$.  Furthermore, 
   \begin{align*}
    \norm{V_t}_{L^p(\IP_t)}^p=\IE_{\IP_{\xi}}\left[\int \norm{V_t(\theta_{tz}\omega,z)}^pT(\omega,dz)\right]=\IE_{\IP_{\xi}}\left[\int \norm{z}^pT(\omega,dz)\right]
\end{align*}
and thus \[
 \int_0^1\norm{V_t}_{L^p(\IP_t)}^p dt=\IE_{\IP_{\xi}}\left[\int \norm{z}^pT(\omega,dz)\right].
\]Taking the infimum over locally finite kernels $T$ satisfying \eqref{eq:transp_prop_kernel}, it follows from Corollary \ref{cor:palm_transp} that  \[
c_p(\xi,\eta)\geq \inf \int_0^1\norm{V_t}_{L^p(\IP_t)}^p dt,
\]
which yields the claim.
\end{proof}

\begin{prop}\label{prop:repres_xi_t}
    Let $((\IP_t)_{t\in [0,1]},(V_t)_{t\in [0,1]})$ satisfy \eqref{eq:CE}, where $(\IP_t)_{t\in [0,1]}$ is a curve of probability measures  and  where $(V_t)_{t\in [0,1]}$ satisfies  the assumptions \eqref{eq:usual_assumptions}.
     Assume that $(\proj_{\Omega})_{\#}\IP_0=\Q_{\xi}$ and $(\proj_{\Omega})_{\#}\IP_1=\Q_{\eta}$.  Then   there exist equivariant random measures $\xi_t$ on $\Omega$ such that $(\proj_{\Omega})_{\#}\IP_t=\Q_{\xi_t}$. Furthermore, we have  $\eta=\xi_1$ and the curve $((\xi_t)_{\#}\Q)_{0\leq t\leq 1}$ is weakly continuous on $\mathcal M(\IR^d)$.
\end{prop}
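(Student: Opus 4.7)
The plan is to use the superposition principle (Corollary~\ref{cor:superpositionpr}) to extract a family of invariant transport kernels $(T_t)_{t\in[0,1]}$ and then invoke Theorem~\ref{eq:Charact_transport_palm} to lift each kernel to an equivariant random measure $\xi_t$ whose Palm measure is $(\proj_\Omega)_\#\IP_t$.

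First I would apply Corollary~\ref{cor:superpositionpr} to obtain a probability measure $\Lambda$ on $\Omega\times C([0,1],\IR^d\times\IR^d)$ concentrated on characteristic curves $(u_s,w_s)_{s\in[0,1]}$ with $u_0=0$. Since $\IP_0=(F_0)_\#\Lambda$ and $F_0(\omega,\gamma)=(\omega,w_0)$, the $\Omega$-marginal of $\Lambda$ is $\Q_\xi$, and I can disintegrate $\Lambda=\Lambda_\omega\otimes \Q_\xi$. For each $t$, set $T_t(\omega,A):=\Lambda_\omega(\{(u,w):u_t\in A\})$ and extend it to an invariant weighted transport kernel via $T_t(\omega,z,A):=T_t(\theta_z\omega,A-z)$, as in Remark~\ref{rem:construction kernel}. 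Define pointwise $\xi_t(\omega,\cdot):=\int T_t(\omega,z,\cdot)\,\xi(\omega,dz)$. Applying \eqref{eq:campbell} to $f(\omega',z):=T_t(\omega',[0,1]^d-z)$ and using $T_t(\omega',\IR^d)=1$ yields $\IE_\Q[\xi_t([0,1]^d)]=1$, whence $\xi_t$ is $\Q$-a.s.\ locally finite (hence a random measure); a direct unwinding using invariance of $T_t$ and equivariance of $\xi$ gives equivariance of $\xi_t$.

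By construction $T_t$ is an invariant $(\xi,\xi_t)$-balancing kernel, so Theorem~\ref{eq:Charact_transport_palm} implies
\begin{align*}
\IE_{\Q_\xi}\Bigl[\int f(\theta_z\omega)\,T_t(\omega,0,dz)\Bigr]=\IE_{\Q_{\xi_t}}[f].
\end{align*}
The left-hand side equals $\int f(\theta_{u_t}\omega)\,d\Lambda=\IE_{(\proj_\Omega)_\#\IP_t}[f]$, by the definition of $T_t$ and the identity $\IP_t=(F_t)_\#\Lambda$. Hence $\Q_{\xi_t}=(\proj_\Omega)_\#\IP_t$. Specialising to $t=1$ gives $\Q_{\xi_1}=\Q_\eta$; Theorem~\ref{eq:Charact_transport_palm} in the Palm-to-balancing direction then shows that $T_1$ is also $(\xi,\eta)$-balancing, so $\eta=\int T_1(\cdot,z,\cdot)\,\xi(\cdot,dz)=\xi_1$ as random measures.

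For weak continuity of $((\xi_t)_\#\Q)_{t\in[0,1]}$ on $\mathcal M(\IR^d)$ it suffices, by a standard criterion, to prove $L^1(\Q)$-continuity of $t\mapsto \int g\,d\xi_t$ for every $g\in C_c(\IR^d)$. Combining the invariance of $T_t$, the Campbell formula, and the substitution $y=u_t+z$ in the outer integral, I would establish
\begin{align*}
\IE_\Q\Bigl[\Bigl|\int g\,d\xi_t-\int g\,d\xi_s\Bigr|\Bigr]\le \IE_{\Q_\xi}\Bigl[\int\int |g(y)-g(y+u_s-u_t)|\,dy\,d\Lambda_\omega\Bigr].
\end{align*}
Translation is $L^1$-continuous for $g\in C_c(\IR^d)$, and $\Lambda$-a.s.\ the characteristic $s\mapsto u_s$ is continuous, so the inner integrand vanishes pointwise as $t\to s$ and is uniformly dominated by $2\|g\|_{L^1}$; since $\Lambda$ is a probability measure, dominated convergence closes the argument. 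The main technical subtlety throughout will be the careful bookkeeping between the Palm identity, the Campbell formula, and the $(\omega,z)\leftrightarrow(\theta_z\omega,0)$ invariance; once this is executed cleanly, all three assertions follow from the cited results.
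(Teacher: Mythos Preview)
Your construction of the kernels $T_t$, the random measures $\xi_t$, and the verification $(\proj_\Omega)_\#\IP_t=\Q_{\xi_t}$ follow the paper's proof essentially verbatim (you are in fact more careful than the paper in checking that $\xi_t$ is a.s.\ locally finite). Your argument for $\xi_1=\eta$ via the reverse implication of Theorem~\ref{eq:Charact_transport_palm} is a minor rephrasing of the paper's direct Campbell computation and is equally valid.

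The genuine difference is in the weak continuity of $t\mapsto(\xi_t)_\#\Q$. The paper restricts the solution to $[r,t]$, re-applies the superposition principle, and bounds $\cost_p(\xi_r,\xi_t)\le\int_r^t\|V_s\|_{L^p(\IP_s)}^p\,ds\to 0$, then invokes an external lemma from \cite{erbar2023optimal} stating that $\cost_p\to 0$ implies weak convergence of the laws. Your route is more elementary and self-contained: you use the single disintegration $\Lambda_\omega$ already obtained, the Campbell formula, and $L^1$-translation continuity of $g\in C_c(\IR^d)$ combined with the pathwise continuity of $s\mapsto u_s$ to get $L^1(\Q)$-continuity of $t\mapsto\int g\,d\xi_t$; this is enough for weak convergence of random measures by the standard criterion (finite-dimensional convergence in probability of $(\int g_i\,d\xi_t)_i$ implies convergence in distribution on $\mathcal M(\IR^d)$). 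Your approach avoids the external reference and the second application of the superposition principle, at the price of not exhibiting the quantitative link to $\cost_p$ that the paper's argument provides.
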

\begin{proof}
Let $\Lambda$ be the probability measure on $\Omega \times C([0,1], \IR^d\times\IR^d)$, constructed in Corollary \eqref{cor:superpositionpr}. Disintegration with respect to the marginal on $\Omega$ then yields a kernel $T$ such that for all measurable  functions $f:\Omega\times \IR^d\to [0,\infty)$\begin{align*}
    \IE_{\IP_t}[f]=\IE_{\IP_0}\left[\int f(\theta_{u_t}\omega,w_t)T(\omega,d (u_s,w_s)_{0\leq s\leq 1})\right]=\IE_{\IP_0}\left[\int f(\theta_u\omega,w)T_t(\omega,d(u,w))\right],
\end{align*}
where the kernel $T_t$ is defined for all $\omega\in \Omega$ by $\int g(u,w) T_t(\omega,d(u,w))=\int g(u_t,w_t)T(\omega,d(u_s,w_s)_{0\leq s\leq 1})$, with $g:\IR^d\times \IR^d\to [0,\infty)$  measurable.

Let $\IP'_t=(\proj_{\Omega})_{\#}\IP_t$.
Then 
\begin{align*}
    \IE_{\IP'_t}[f]=\IE_{\IP_0}\left[\int f(\theta_u\omega)T_t(\omega,d(u,w))\right]=\IE_{\IP_0'}\left[\int f(\theta_u\omega)T_t'(\omega,du)\right],
\end{align*}
where for fixed $\omega\in \Omega$ the kernel $T_t'(\omega,du)$ is defined as the projection on the first marginal of $T_t(\omega,d(u,w))$, that is \begin{align*}
    T_t'(\omega,A)=T_t(\omega,A\times \IR^d),\quad \forall \omega\in \Omega, A\in \mathcal B(\IR^d).
\end{align*}
By Remark \ref{rem:construction kernel} there exists an equivariant random measure $\xi_t$ such that $T_t'$ is $(\xi,\xi_t)$-balancing and such that $\IP_t'=\Q_{\xi_t}$.
We first prove weak continuity of the distributions $((\xi_t)_{\#}\Q)_{0\leq t\leq 1}$. Let $r< t$ and note that the curve $(\IP_s)_{r\leq s\leq t}$ also solves the continuity equation w.r.t. the vector fields $(V_s)_{r\leq s\leq t}$. Let $\tilde{\Lambda}$ be the probability measure obtained by applying the superposition principle Corollary \ref{cor:superpositionpr} to the restricted curve $(\IP_s)_{r\leq s\leq t}$.
Corollary \ref{cor:palm_transp} thus yields the bound
\begin{align*}
    \cost_p(\xi_r,\xi_t)\leq \IE_{\tilde{\Lambda}}\left[\norm{(u_{t-r},w_{t-r})-(u_0,w_0)}^p\right] &\leq \int_0^{t-r}\IE_{\tilde{\Lambda}}\left[\norm{(\dot{u}_s,\dot{w}_s)}^p\right]ds\\
    &=\int_r^t  \norm{V_s}_{L^p(\IP_s)}^p        ds.
\end{align*}
Hence $\cost_p(\xi_r,\xi_t)\to 0$ as $r\to t$. By \cite[Lemma 5.4]{erbar2023optimal} this implies weak continuity of the distributions $((\xi_t)_{\#}\Q)_{0\leq t\leq 1}$. 

We now prove that  $\xi_1=\eta$. Let $f:\Omega\times \IR^d\to [0,\infty)$ be  measurable. Then by the Campbell formula \begin{align}\label{eq:campbell_applied}
    \IE_{\Q}\left[\int f(\omega,x)\xi_1(dx)\right]&=
    \IE_{\Q_{\xi_1}}\left[\int f(\theta_{-x}\omega,x)dx\right]\\
    &=\IE_{\IP_1'}\left[\int f(\theta_{-x}\omega,x)dx\right]\nonumber\\
    &=\IE_{\Q}\left[\int f(\omega,x)\eta(dx)\right].\nonumber
\end{align} 

 For a measurable and bounded set $B\subset \IR^d$ define the function \[
f(\omega,x)=\eins_{\xi_1(B)>\eta(B)}(\omega)\eins_B(x),
\]
plugging the functions $f$ inside \eqref{eq:campbell_applied}, we obtain \[
\IE_{\Q}\left[\eins_{\xi_1(B)>\eta(B)}\xi_1(B)\right]=\IE_{\Q}\left[\eins_{\xi_1(B)>\eta(B)}\eta(B)\right]
.\] 
Hence, $\xi_1=\eta$ $\Q$-a.s..
\end{proof}

  
  Finally, the following Corollary is a precise formulation of our main result, Theorem \ref{thm:main}.
  
\begin{cor}\label{cor:BenBrenier_lawsPP}
  For two stationary random measures $\P_i$, $i=0,1$, with the same finite intensity, we have \begin{align}
        \mathsf W^p_p(\PP_0,\PP_1)=\inf \int_0^1 \norm{V_t}_{L^p(\IP_t)}^pdt,
    \end{align}
    where the infimum runs over all admissible probability spaces $(\Omega,\mathcal F,\Q)$ and 
    curves of equivariant random measures $(\xi_t)_{0\leq t\leq 1}$, defined on $\Omega$, with the following properties 
    \begin{enumerate}
    \item 
      $(\proj_{\Omega})_{\#}\IP_t=\Q_{\xi_t}$, $0\leq t\leq 1$, where
          $((\IP_t)_{t\in [0,1]},(V_t)_{t\in [0,1]})$ satisfies \eqref{eq:CE},   such that $(\IP_t)_{t\in [0,1]}$ is a curve of probability measures  and  where $(V_t)_{t\in [0,1]}$ satisfies  the assumptions \eqref{eq:usual_assumptions}.
        \item The curve of distributions $((\xi_t)_{\#}\Q)_{t\in [0,1]}$ is weakly continuous.
        \item $\xi_0\underset{\Q}{\sim}\P_0$ and $\xi_1\underset{\Q}{\sim}\P_1$      
    \end{enumerate}
\end{cor}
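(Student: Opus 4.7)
The plan is to assemble the corollary from the inner Benamou--Brenier formula on a fixed admissible space (Theorem \ref{prop:Benamou-Brenier}) together with the definition of $\mathsf W_p$ as an infimum over admissible spaces, using Proposition \ref{prop:repres_xi_t} as the bridge between solutions of \eqref{eq:CE} with Palm marginals and curves of equivariant random measures. Writing $\mathrm{RHS}$ for the right-hand side of the claimed identity, the goal is to establish the two inequalities separately.

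For the inequality $\mathrm{RHS}\ge \mathsf W_p^p(\P_0,\P_1)$, I would take any admissible space $(\Omega,\mathcal F,\Q)$ and any curve $(\xi_t)_{t\in[0,1]}$ satisfying (1)--(3) together with a solution $((\IP_t),(V_t))$ of \eqref{eq:CE}. Since $\xi_0\sim\P_0$ and $\xi_1\sim\P_1$ are equivariant random measures defined on an admissible probability space, the definition \eqref{eq:Wp} gives
\begin{equation*}
\mathsf W_p^p(\P_0,\P_1)\le \cost_p(\xi_0,\xi_1).
\end{equation*}
Applying Theorem \ref{prop:Benamou-Brenier} to the pair $(\xi_0,\xi_1)$ on the given admissible space, the solution $((\IP_t),(V_t))$ is admissible for the variational problem computing $\cost_p(\xi_0,\xi_1)$ because $(\proj_\Omega)_\#\IP_0=\Q_{\xi_0}$ and $(\proj_\Omega)_\#\IP_1=\Q_{\xi_1}$ by (1), so $\cost_p(\xi_0,\xi_1)\le \int_0^1\norm{V_t}^p_{L^p(\IP_t)}\,dt$. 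Chaining and infimizing over all admissible choices yields $\mathsf W_p^p(\P_0,\P_1)\le\mathrm{RHS}$.

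For the reverse inequality, fix $\varepsilon>0$ and choose an admissible space $(\Omega,\mathcal F,\Q)$ with equivariant random measures $(\xi,\eta)$, $\xi\sim\P_0$, $\eta\sim\P_1$, such that $\cost_p(\xi,\eta)\le \mathsf W_p^p(\P_0,\P_1)+\varepsilon$. Theorem \ref{prop:Benamou-Brenier} then provides a solution $((\IP_t),(V_t))$ of \eqref{eq:CE} on this admissible space, with $(\proj_\Omega)_\#\IP_0=\Q_\xi$, $(\proj_\Omega)_\#\IP_1=\Q_\eta$ and vector field satisfying \eqref{eq:usual_assumptions}, such that $\int_0^1\norm{V_t}_{L^p(\IP_t)}^p\,dt\le \cost_p(\xi,\eta)+\varepsilon$. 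Proposition \ref{prop:repres_xi_t} upgrades this solution to a curve $(\xi_t)_{t\in[0,1]}$ of equivariant random measures with $(\proj_\Omega)_\#\IP_t=\Q_{\xi_t}$, $\xi_0=\xi$, $\xi_1=\eta$ $\Q$-almost surely, and with $((\xi_t)_\#\Q)_{t\in[0,1]}$ weakly continuous. This datum is admissible for the variational problem defining $\mathrm{RHS}$, hence $\mathrm{RHS}\le \mathsf W_p^p(\P_0,\P_1)+2\varepsilon$, and letting $\varepsilon\to 0$ finishes the proof.

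The argument is essentially a packaging of already established pieces; the only delicate point is to make sure that the correspondence between CE solutions and curves of equivariant random measures is carried out on \emph{admissible} probability spaces, so that the classes over which the two infima are taken coincide. This is already guaranteed by Proposition \ref{prop:repres_xi_t} (which is formulated on a fixed admissible space) and by the fact that Theorem \ref{prop:Benamou-Brenier} also works on any fixed admissible space, so no additional construction of a probability space is needed; the two-layer infimum over admissible spaces simply factors through both statements without friction.
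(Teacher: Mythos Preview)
Your proposal is correct and follows exactly the approach the paper intends: the paper's proof is the single line ``This follows from Propositions \ref{prop:Benamou-Brenier} and \ref{prop:repres_xi_t}'', and you have carefully unpacked that into the two inequalities, using \eqref{eq:Wp} and Theorem \ref{prop:Benamou-Brenier} for $\mathsf W_p^p\le\mathrm{RHS}$, and Theorem \ref{prop:Benamou-Brenier} together with Proposition \ref{prop:repres_xi_t} for the reverse. One small remark: Proposition \ref{prop:repres_xi_t} explicitly asserts $\xi_1=\eta$ but not $\xi_0=\xi$; however the same Campbell-formula argument from its proof (or simply the fact that $\Q_{\xi_0}=(\proj_\Omega)_\#\IP_0=\Q_\xi$ determines the law of the equivariant random measure) yields $\xi_0\underset{\Q}{\sim}\P_0$, which is all condition (3) requires.
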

\begin{proof}
    This follows from Propositions \ref{prop:Benamou-Brenier} and \ref{prop:repres_xi_t}.
\end{proof}

\begin{thebibliography}{EHJM23}

\bibitem[AGS08]{AGS08}
Luigi Ambrosio, Nicola Gigli, and Giuseppe Savar{\'e}.
\newblock {\em Gradient flows in metric spaces and in the space of probability
  measures}.
\newblock Basel: Birkh{\"a}user, 2nd ed. edition, 2008.

\bibitem[BB00]{Benamou2000ACF}
Jean-David Benamou and Yann Brenier.
\newblock A computational fluid mechanics solution to the {Monge}-{Kantorovich}
  mass transfer problem.
\newblock {\em Numer. Math.}, 84(3):375--393, 2000.

\bibitem[DSHS23]{DSHeSu23}
Lorenzo Dello~Schiavo, Ronan Herry, and Kohei Suzuki.
\newblock Wasserstein geometry and {R}icci curvature bounds for {P}oisson
  spaces.
\newblock {\em arXiv:2303.00398}, 2023.

\bibitem[DSS21]{DSSu21}
Lorenzo Dello~Schiavo and Kohei Suzuki.
\newblock Configuration spaces over singular spaces--{I}. dirichlet-form and
  metric measure geometry.
\newblock {\em arXiv:2109.03192}, 2021.

\bibitem[DSS22]{DSSu22}
Lorenzo Dello~Schiavo and Kohei Suzuki.
\newblock Configuration spaces over singular spaces--{II}. curvature.
\newblock {\em arXiv e-prints}, 2022.

\bibitem[EH15]{ErHu15}
Matthias Erbar and Marting Huesmann.
\newblock Curvature bounds for configuration spaces.
\newblock {\em Calc.~Var.~Partial Differential Equations}, 54(1):397--430, Sep
  2015.

\bibitem[EHJM23]{erbar2023optimal}
Matthias Erbar, Martin Huesmann, Jonas Jalowy, and Bastian Müller.
\newblock Optimal transport of stationary point processes: Metric structure,
  gradient flow and convexity of the specific entropy, 2023.

\bibitem[Erb14]{Er14}
Matthias Erbar.
\newblock Gradient flows of the entropy for jump processes.
\newblock {\em Ann. Inst. Henri Poincar\'e Probab. Stat.}, 50(3):920--945,
  2014.

\bibitem[FG21]{figalli2021invitation}
Alessio Figalli and Federico Glaudo.
\newblock {\em An invitation to optimal transport, {Wasserstein} distances, and
  gradient flows}.
\newblock EMS Textb. Math. Berlin: European Mathematical Society (EMS), 2021.

\bibitem[GHP21]{GOZLAN2021109141}
Natha{\"e}l Gozlan, Ronan Herry, and Giovanni Peccati.
\newblock Transport inequalities for random point measures.
\newblock {\em J. Funct. Anal.}, 281(9):45, 2021.
\newblock Id/No 109141.

\bibitem[Har02]{Hartman2002OrdinaryDE}
Philip Hartman.
\newblock Ordinary differential equations, second edition.
\newblock In {\em Classics in applied mathematics}, 2002.

\bibitem[HM23]{HuMu23}
Martin Huesmann and Bastian M{\"u}ller.
\newblock Transportation of random measures not charging small sets.
\newblock {\em arXiv:2303.00504}, 2023.

\bibitem[HP05]{HoPe05}
Alexander~E. Holroyd and Yuval Peres.
\newblock Extra heads and invariant allocations.
\newblock {\em Ann. Probab.}, 33(1):31--52, 2005.

\bibitem[HS13]{HS13}
Martin Huesmann and Karl-Theodor Sturm.
\newblock Optimal transport from {Lebesgue} to {Poisson}.
\newblock {\em Ann. Probab.}, 41(4):2426--2478, 2013.

\bibitem[HT19]{HuTr19}
Martin Huesmann and Dario Trevisan.
\newblock A {Benamou}-{Brenier} formulation of martingale optimal transport.
\newblock {\em Bernoulli}, 25(4A):2729--2757, 2019.

\bibitem[Hue16]{H16}
Martin Huesmann.
\newblock Optimal transport between random measures.
\newblock {\em Ann. Inst. Henri Poincar{\'e}, Probab. Stat.}, 52(1):196--232,
  2016.

\bibitem[KM23]{KhMe23}
Ali Khezeli and Samuel Mellick.
\newblock On the existence of balancing allocations and factor point processes.
\newblock {\em arXiv:2303.05137}, 2023.

\bibitem[LMT14]{Last_Mörters}
G{\"u}nter Last, Peter M{\"o}rters, and Hermann Thorisson.
\newblock {Unbiased shifts of Brownian motion}.
\newblock {\em The Annals of Probability}, 42(2):431 -- 463, 2014.

\bibitem[LT09]{LaTh09}
G\"unter {Last} and Hermann {Thorisson}.
\newblock {Invariant transports of stationary random measures and
  mass-stationarity.}
\newblock {\em {Ann. Probab.}}, 37(2):790--813, 2009.

\bibitem[LT23]{LaTh23}
G\"unter {Last} and Hermann {Thorisson}.
\newblock {Transportion of diffuse random measures on $\mathbb{R}^d$}.
\newblock {\em {ALEA, Lat. Am. J. Probab. Math. Stat.}}, 20(1):577--592, 2023.

\bibitem[Maa11]{Maa11}
Jan Maas.
\newblock Gradient flows of the entropy for finite {M}arkov chains.
\newblock {\em J.~Funct.~Anal.}, 261(8):2250--2292, 2011.

\bibitem[Mie11]{Mie11}
Alexander Mielke.
\newblock A gradient structure for reaction-diffusion systems and for
  energy-drift-diffusion systems.
\newblock {\em Nonlinearity}, 24(4):1329--1346, 2011.

\bibitem[Ott01]{Ot01}
Felix Otto.
\newblock The geometry of dissipative evolution equations: {The} porous medium
  equation.
\newblock {\em Commun. Partial Differ. Equations}, 26(1-2):101--174, 2001.

\bibitem[OV00]{OtVi00}
Felix Otto and C{\'e}dric Villani.
\newblock Generalization of an inequality by {Talagrand} and links with the
  logarithmic {Sobolev} inequality.
\newblock {\em J. Funct. Anal.}, 173(2):361--400, 2000.

\bibitem[PRST22]{PeRoSaTs22}
Mark~A. Peletier, Riccarda Rossi, Giuseppe Savar{\'e}, and Oliver Tse.
\newblock Jump processes as generalized gradient flows.
\newblock {\em Calc. Var. Partial Differ. Equ.}, 61(1):85, 2022.
\newblock Id/No 33.

\bibitem[San15]{Santa}
Filippo Santambrogio.
\newblock Optimal transport for applied mathematicians.
\newblock {\em Birk{\"a}user, NY}, 55(58-63):94, 2015.

\bibitem[Suz22]{Su22}
Kohei Suzuki.
\newblock Curvature bound of {D}yson {B}rownian motion.
\newblock {\em arXiv:2301.00262}, 2022.

\bibitem[Vil09]{Villani}
C{\'e}dric Villani.
\newblock {\em Optimal transport: old and new}, volume 338.
\newblock Springer, 2009.

\end{thebibliography}

\end{document}